\newtheorem{theorem}{Theorem}[section]
\newtheorem{corollary}[theorem]{Corollary}
\newtheorem{lemma}[theorem]{Lemma}
\theoremstyle{definition}
\newtheorem{definition}[theorem]{Definition}
\newcommand{\bbR}{\mathbb{R}}
\newcommand{\wL}{L}
\newcommand{\wS}{\widetilde{S}}
\newcommand{\cK}{\mathcal{K}}
\newcommand{\tff}{\tilde{f}}
\newcommand{\tii}{\tilde{i}}
\newcommand{\bii}{\bar{i}}
\newcommand{\hii}{\hat{i}}
\newcommand{\brii}{\breve{i}}
\newcommand{\tsfrac}  [2] { {\textstyle \frac{#1}{#2} } }
\newcommand{\nrm}  [1] {\Vert #1 \Vert}
\newcommand{\threebythree}[9]{ \begin{array}{ccc} #1 & #2 & #3 \\ #4 & #5 & #6 \\ #7 & #8 & #9 \end{array} }
\newcommand{\threebyone}[3]{ \begin{array}{c} #1 \\ #2 \\ #3 \end{array} }
\newcommand{\dd}{{\rm d}}
\newcommand{\ope}{{\rm e}}
\newcommand{\idop}{I\! d}
\title[Discrete gradient methods]
      {Discrete gradient methods for preserving a first integral of an ordinary differential equation}
\author[Richard A. Norton and G. R. W. Quispel]{}
\subjclass{Primary: 65D30, Secondary: 65L20, 37M99, 70B10}  
 \keywords{geometric integration, energy preserving integrators, discrete gradients, Hamiltonian systems, order of accuracy, linearly implicit methods.}
 \email{richard.norton@latrobe.edu.au}
 \email{R.Quispel@latrobe.edu.au}
\thanks{The authors were supported by the Australian Research Council}
\begin{document}
\maketitle

\centerline{\scshape Richard A. Norton and G. R. W. Quispel}
\medskip
{\footnotesize
 \centerline{Department of Mathematics and Statistics}
   \centerline{La Trobe University}
   \centerline{Melbourne, Victoria 3086, Australia}
} 

%

\bigskip

\centerline{\emph{This paper is dedicated to Arieh Iserles, a dear friend and a wonderful colleague.}}

\begin{abstract}
In this paper we consider discrete gradient methods for approximating the solution and preserving a first integral (also called a constant of motion) of autonomous ordinary differential equations.  We prove under mild conditions for a large class of discrete gradient methods that the numerical solution exists and is locally unique, and that for arbitrary $p\in \mathbb{N}$ we may construct a method that is of order $p$.  In the proofs of these results we also show that the constants in the time step constraint and the error bounds may be chosen independently from the distance to critical points of the first integral.  

In the case when the first integral is quadratic, for arbitrary $p \in \mathbb{N}$, we have devised a new method that is linearly implicit at each time step and of order $p$. This new method has significant advantages in terms of efficiency.  We illustrate our theory with a numerical example.
\end{abstract}

\section{Introduction}
\label{sec intro}

Consider the autonomous ordinary differential equation (ODE)
\begin{equation}
\label{p1}
	\dot{x} = f(x) \qquad t > 0,
\end{equation}
where $x(t) \in \bbR^d$ for some $d \in \mathbb{N}$, $x(0) = x_0 \in \bbR^d$ is the initial condition and $f:\bbR^d \rightarrow \bbR^d$ is locally Lipschitz continuous.  Then given a bounded set $B \subset \bbR^d$, there exists a $T > 0$ such that for any $x_0 \in B$ the solution exists and remains bounded for $t \in [0,T]$ (see e.g. \cite[Thm. I.7.3 on p. 37]{SolvingODEs1}).  We assume that this ODE has a conserved first integral (also called a constant of motion) $I : \bbR^d \rightarrow \bbR$ such that 
\begin{equation}
\label{i1}
	I(x(t)) = I(x_0) \qquad \mbox{for all $t \in [0,T]$}.
\end{equation}
To simplify notation define $i(x) := \nabla I(x)$ for all $x\in \bbR^d$, and assume that $i(x)$ is locally Lipschitz continuous.  Define $\bbR_+ := \lbrace t \in \bbR: t > 0\rbrace$.  According to \cite{MQR99}, on $\lbrace x \in \bbR^d : i(x) \neq 0 \rbrace$ we may write \eqref{p1} as
\begin{equation}
\label{p3}
	\dot{x} = S(x) i(x)
\end{equation}
where $S(x) \in \bbR^{d \times d}$ is skew-symmetric ($S^T = -S$) and may be given by the so-called \emph{default} formula
\begin{equation}
\label{eqn3}
	S(x) = \frac{f(x) i(x)^T - i(x) f(x)^T}{|i(x)|^2}.
\end{equation}
In general, the choice of $S(x)$ satisfying $f(x) = S(x) i(x)$ is not unique.  Moreover, Proposition 2.1 in \cite{MQR99} states that if $f \in C^r(\bbR^d;\bbR^d)$ for $r \geq 1$ and $I$ is a Morse function (i.e. smooth with non-degenerate critical points) then $S$ in \eqref{eqn3} is $C^r$ and locally bounded on $\lbrace x \in \bbR^d : i(x) \neq 0 \rbrace$, and in the proof of Proposition 2.1 we also have that $|f(x)|/|i(x)|$ is locally bounded on $\lbrace x \in \bbR^d : i(x) \neq 0 \rbrace$.  In fact, the requirement that $f \in C^r(\bbR^d;\bbR^d)$ for $r \geq 1$ may be relaxed to $f$ locally Lipschitz continuous so that $S$ is also only locally Lipschitz continuous on $\lbrace x \in \bbR^d : i(x) \neq 0 \rbrace$.

Let us also make the assumption that $I$ is a Morse function so that for a bounded set $B \subset \bbR^d$ there exists a constant $C_1 = C_1(B)$ such that 
\begin{equation}
\label{eqn2}
	|f(x)| \leq C_1 |i(x)| \qquad \mbox{for all $x \in B$}.
\end{equation}
Note that from continuity it follows that if $i(x) = 0$ for $x \in B$, then $f(x) = 0$.  A useful constant throughout this paper will be $C_2 = C_2(B) := C_1 + \tsfrac{1}{5}$.  

Methods for approximating the solution to this type of ODE that simultaneously preserve the integral are of interest in many applications.  For example, Hamiltonian systems, Poisson systems, celestial mechanics, the Lotka-Volterra system and the undamped Duffing oscillator (see \cite{HLW} and references therein).  Here we consider \emph{discrete gradient methods} for approximating the solution to \eqref{p1} whilst exactly\footnote{i.e. up to round-off error or a larger specified tolerance.} preserving $I$ (see e.g.  \cite{MQR99,quispelcapel96,quispelturner96}).

Let us first define a special type of discretization of the gradient of $I$, a \emph{discrete gradient of $I$}.

\begin{definition}
(Gonzalez \cite{gonzalez96}).
A discrete gradient of $I$, denoted $\bii : \bbR^d \times \bbR^d \rightarrow \bbR^d$, is continuous and satisfies
$$
	\bii(x,x') \cdot (x' - x) = I(x') - I(x) \quad \mbox{and} \quad \bii(x,x) = i(x) \quad \mbox{for all $x,x' \in \bbR^d$.}
$$
\end{definition}

There are several ways of constructing a discrete gradient.  Two notable examples are the one used in the averaged vector field method (called the mean value discrete gradient in \cite{MQR99}, see also \cite{quispelmclaren08}) and the coordinate increment method \cite{abeitoh}.

Given a time step $h$ we define a discrete gradient method by the map $x \mapsto x'$\footnote{It will be our convention to let $x = x_n$ (the approximate solution at step $n$) and $x' = x_{n+1}$.}
\begin{equation}
\label{disc1}
	x' = \begin{cases}
		x + h \wS(x,x',h) \bii(x,x') & \mbox{if $i(x) \neq 0$},\\
		x & \mbox{if $i(x) = 0$},
		\end{cases}
\end{equation}
where $\bii$ is a discrete gradient of $I$ and $\wS$ is any skew-symmetric consistent approximation of $S$.  By consistent we mean that $\wS(x,x',h)$ is continuous and $\wS(x,x,0) = S(x)$ on $\lbrace x \in \bbR^d : i(x)\neq 0 \rbrace$.  All discrete gradient methods preserve $I$ because
\begin{equation}
\label{disc2}
	I(x') - I(x) = \bii(x,x') \cdot (x'-x) = h (\bii(x,x'))^T \wS(x,x',h) \bii(x,x') = 0.
\end{equation}
The final equality in \eqref{disc2} is because $\wS$ is skew-symmetric.

By discretizing the default formula for $S(x)$ given in \eqref{eqn3} we obtain an example of a discrete gradient method (there are many different possible discrete gradient methods for \eqref{p1}).  Let $\tii(x,x',h)$, $\hii(x,x',h)$ and $\brii(x,x',h)$ be consistent approximations to $i(x)$, so that they are all continuous and 
$$
	\tii(x,x,0) = \hii(x,x,0) = \brii(x,x,0) = i(x) \qquad \mbox{for all $x \in \bbR^d$,}
$$
let $\tilde{f}(x,x',h)$ be a consistent approximation of $f(x)$, and let $\bii(x,x')$ be a discrete gradient of $I$.  Then a discrete gradient method applied to \eqref{p1} is defined by \eqref{disc1} with $\wS(x,x',h)$ given by
\begin{equation}
\label{eqn4}
	\wS(x,x',h) =  \frac{\tff(x,x',h) (\tii(x,x',h))^T - \tii(x,x',h) (\tff(x,x',h))^T }{\hii(x,x',h) \cdot \brii(x,x',h) }.
\end{equation}

A useful way of describing the accuracy of a numerical method for solving \eqref{p1} is to determine its \emph{order of accuracy}.  For one-step methods this is defined by the truncation error around the point $x$ for a time step $h$.  

\begin{definition}
A one-step method $x \mapsto x'$ with time step $h$ for solving \eqref{p1} has order of accuracy $p \in \mathbb{N}$, if there exist positive constants $C$ and $H$ such that
$$
	|x' - x(t+h)| \leq C h^{p+1} \qquad \mbox{for all $h \in [0,H]$ and all $x \in B$,}
$$
where $x(\cdot)$ denotes the solution to \eqref{p1} with $x(t) = x$ for some $t \in \bbR_+$ and $B$ is a compact set in $\bbR^d$.  The constants $C$ and $H$ may depend on $B$ but should be independent of $x$ and $h$.  
\end{definition}

This definition (taken from \cite[Def. V.1.3]{gautschi}) is more precise about the dependencies for the constants $C$ and $H$ than the definitions for order $p$ given in other texts (e.g. \cite[Def. II.1.2]{SolvingODEs1} and \cite[Def. II.1.2]{HLW}) where it is defined by $|x' - x(t+h)| = \mathcal{O}(h^{p+1})$ as $h \rightarrow 0$.  These other definitions are ambiguous regarding how the hidden constant in $\mathcal{O}(\cdot)$ may depend on other parameters, and how small $h$ should be.   By using the definition from \cite{gautschi} in our results we can be sure that the constants in the definition of order $p$ do not depend on $|i(x)|$, which may be small.

Throughout this paper we will also make use of Banach's Fixed Point Theorem  (also called the Contraction Principle), see e.g. \cite[Thm. 3.1.2 on p. 74]{fixedpointtheory}.

\begin{theorem}[Banach's Fixed Point Theorem]
\label{thm banach}
Let $(X,d)$ be a non-empty complete metric space.  Let $T : X \rightarrow X$ be a contraction on $X$, i.e. there exists a $q \in (0,1)$ such that 
$$
	d(T(x),T(y)) \leq q d(x,y) \qquad \mbox{for all $x,y \in X$}.
$$
Then there exists a unique fixed point $x^* \in X$ such that $T(x^*) = x^*$.  Furthermore, the fixed point can be found by iteration, $x^{n+1} = T(x^n)$ for $n=0,1,2,\dotsc$ with $x^0 \in X$ arbitrary.
\end{theorem}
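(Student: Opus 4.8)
The plan is to construct the fixed point explicitly by Picard iteration, show the resulting sequence is Cauchy, invoke completeness to extract a limit, and then verify that this limit is the unique fixed point. Fix an arbitrary $x^0 \in X$ and define $x^{n+1} = T(x^n)$ for $n = 0,1,2,\dotsc$.

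First I would establish geometric decay of consecutive distances. Applying the contraction property inductively gives $d(x^{n+1},x^n) \leq q\, d(x^n,x^{n-1}) \leq \dotsb \leq q^n d(x^1,x^0)$ for all $n$. Then for $m > n$ a telescoping application of the triangle inequality, combined with the geometric series bound, yields
$$
	d(x^m,x^n) \leq \sum_{k=n}^{m-1} d(x^{k+1},x^k) \leq d(x^1,x^0) \sum_{k=n}^{\infty} q^k = \frac{q^n}{1-q}\, d(x^1,x^0),
$$
which tends to $0$ as $n \to \infty$ because $q \in (0,1)$. Hence $(x^n)$ is a Cauchy sequence, and by completeness of $(X,d)$ it converges to some $x^* \in X$.

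Next I would check that $x^*$ is a fixed point. Since $T$ is a contraction it is Lipschitz, hence continuous, so passing to the limit in $x^{n+1} = T(x^n)$ gives $x^* = \lim_n x^{n+1} = \lim_n T(x^n) = T(x^*)$. (Equivalently, one can bound $d(T(x^*),x^*) \leq d(T(x^*),x^{n+1}) + d(x^{n+1},x^*) \leq q\, d(x^*,x^n) + d(x^{n+1},x^*)$ and let $n \to \infty$.) For uniqueness, suppose $x^*$ and $y^*$ are both fixed points; then $d(x^*,y^*) = d(T(x^*),T(y^*)) \leq q\, d(x^*,y^*)$, so $(1-q)\,d(x^*,y^*) \leq 0$, and since $q < 1$ this forces $d(x^*,y^*) = 0$, i.e. $x^* = y^*$.

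I do not expect a genuine obstacle here, as this is the classical argument; the one point requiring a little care is the geometric-series estimate, which simultaneously delivers the Cauchy property and, as a byproduct (letting $m \to \infty$ above), the a priori error bound $d(x^n,x^*) \leq \frac{q^n}{1-q}\, d(x^1,x^0)$ that makes the iteration quantitatively useful later in the paper.
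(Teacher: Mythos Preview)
Your argument is the standard, correct proof of the Banach Fixed Point Theorem. However, the paper does not supply its own proof of this result: the theorem is stated and attributed to \cite[Thm.~3.1.2 on p.~74]{fixedpointtheory}, and is then used as a black box in the proofs of Theorem~\ref{thm exist}, Lemma~\ref{lem exist f}, Lemma~\ref{lem rk exist}, and Lemma~\ref{lem exist f2}. So there is nothing in the paper to compare your proof against; you have simply filled in a classical result that the authors chose to cite rather than reprove.
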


In Section \ref{sec exist} we prove that discrete gradient methods where $\wS$ has the form \eqref{eqn4} are well-defined in the sense that provided $h$ is sufficiently small and $\tff$, $\tii$, $\hii$, $\brii$ and $\bii$ satisfy certain consistency and local Lipschitz continuity conditions, then there exists a locally unique solution to \eqref{disc1}.  In Section \ref{sec error} we prove that for arbitrarily chosen $p \in \mathbb{N}$, if $\tff$, $\tii$, $\hii$, $\brii$ and $\bii$ satisfy two additional conditions ($\tff$ defines a method of order $p$ and $|\hii \cdot \brii - \tii \cdot \bii|$ is bounded in a special way) then we get a discrete gradient method of order $p$.

In Section \ref{sec modified} we consider discrete gradient methods from the perspective of doing computations.  Generally, each step of a discrete gradient method requires solving a nonlinear system of equations for $x'$ and this may add a significant amount to the computational cost of the method because an iterative scheme, such as Newton's method, must be employed at each step.  In the case when $I$ is quadratic we present a new method that is linearly implicit in $x'$ at each time step, so only a linear system of equations must be solved at each step.  We also show that Runge-Kutta methods, under very mild conditions on the coefficients, give an $\tff$ that satisfies the conditions required in Sections \ref{sec exist} and \ref{sec error}, and therefore we may use Runge-Kutta methods of order $p$ (for some $p \in \mathbb{N}$) to construct discrete gradient methods of order $p$.

Following the theory in these sections we present a numerical example in Section \ref{sec examples} to illustrate our theory, and finally, in Section \ref{sec conclusion} we discuss the implications of this work and possible avenues for future research.

\section{Existence and uniqueness}
\label{sec exist}

At each time step of the discrete gradient method we must (in general) solve a nonlinear system of equations (see \eqref{disc1}) for $x'$, but does the solution to this system of equations exist?  In this section we present a theorem that ensures for sufficiently small time step $h$, the map from $x \mapsto x'$ is well-defined in the sense that there exists a locally unique solution $x'$ to the system of equations \eqref{disc1} for the case when $\wS$ is given by \eqref{eqn4}.  

Usual techniques for achieving this type of result include applying the Implicit Function Theorem (see \cite{papi05}) or the Newton-Kantorovich Theorem (see \cite{ortega}).  For example, the Newton-Kantorovich Theorem is used in \cite{hairer00} to obtain existence of a numerical solution for a symmetric projection method, which requires solving a nonlinear system of equations at each time step.  In our experience these approaches for discrete gradient methods lead to a condition on the time step such as $h \leq C |i(x)|^r$ for some positive constants $C$ and $r$.  If we are close to a critical point of $I$ (i.e. when $|i(x)|$ is small) then the theory implies that we must also take $h$ small.  Our result and its proof below avoid this issue and we show that a solution to the nonlinear system of equations for a discrete gradient method (\eqref{disc1} with $\wS$ defined by \eqref{eqn4}) exists and is locally unique for a sufficiently small time step independent of $|i(x)|$ (and hence independent of the distance to critical points of $I$).

The local nature of our result (everything will depend on an initially chosen bounded set $B$) is only due to the local Lipschitz continuity of $f$ and $i$, and \eqref{eqn2}, rather than also depending on the distance to critical points of $I$.

We will require the following definition of a ball around a point $x \in \bbR^d$.  Given a constant $R > 0$ and $x \in \bbR^d$ define
$$
	B_R(x) := \lbrace z \in \bbR^d : |z-x| \leq \tsfrac{|i(x)|}{R} \rbrace.
$$
Note that if $i(x)=0$, then $B_R(x) = \lbrace x \rbrace$.

The following theorem ensures that, for sufficiently small $h$ and under certain local Lipschitz conditions, the map $x \mapsto x'$ defined by \eqref{disc1} and \eqref{eqn4} is locally well-defined, in the sense that there exists a locally unique solution to the nonlinear system of equations defined by \eqref{disc1} and \eqref{eqn4}.

\begin{theorem}
\label{thm exist}
Let $B$ be a bounded set in $\bbR^d$ and suppose there exist positive constants $R$, $\wL$ and $H$ such that for each $x \in B$ and all $u,v,w \in B_R(x)$ and $h \in [0,H)$, $\tilde{f}: \bbR^d \times \bbR^d \times \bbR_+ \rightarrow \bbR^d$ satisfies
\begin{equation}
\label{eqn11z}
\begin{split}
	\tff(x,x,0) &= f(x), \\
	|\tff(u,v,h) - \tff(w,v,h)| &\leq \wL |u-w|, \\
	|\tff(u,v,h) - \tff(u,w,h)| &\leq \wL |v - w|, \\
	|\tff(x,x,h) - \tff(x,x,0)| &\leq \wL h |i(x)|,
\end{split}
\end{equation}
$\bii : \bbR^d \times \bbR^d \rightarrow \bbR^d$ is a discrete gradient of $I$ satisfying 
\begin{equation}
\label{eqn11z1}
\begin{split}
	\bii(x,x) &= i(x), \\
	|\bii(u,v) - \bii(w,v)| &\leq \wL |u-w|, \\
	|\bii(u,v) - \bii(u,w)| &\leq \wL |v - w|, \\
\end{split}
\end{equation}
$\tii : \bbR^d \times \bbR^d \times \bbR_+ \rightarrow \bbR^d$ satisfies
\begin{equation}
\label{eqn12z}
\begin{split}
	\tii(x,x,0) &= i(x), \\
	|\tii(u,v,h) - \tii(w,v,h)| &\leq \wL |u-w|, \\
	|\tii(u,v,h) - \tii(u,w,h)| &\leq \wL |v - w|, \\
	|\tii(x,x,h) - \tii(x,x,0)| &\leq \wL h |i(x)|,
\end{split}
\end{equation}
and similarly for $\hii : \bbR^d \times \bbR^d \times \bbR_+ \rightarrow \bbR^d$ and $\brii : \bbR^d \times \bbR^d \times \bbR_+ \rightarrow \bbR^d$.  Let $C_2$ be the constant defined after \eqref{eqn2} and define 
$$
	R' := \max \lbrace R, 10 \wL \rbrace \qquad \mbox{ and } \qquad H' := \min \lbrace H, \tsfrac{1}{10\wL} , \tsfrac{1}{6C_2 R'},\tsfrac{1}{(36C_2 + 6) \wL}\rbrace.
$$
Then for each $x \in B$ and $h \in [0,H')$ there exists a unique $x' \in B_{R'}(x)$ satisfying \eqref{disc1} where $\wS$ is given by the formula \eqref{eqn4}.  
\end{theorem}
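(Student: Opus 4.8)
The plan is to fix $x \in B$ and recast \eqref{disc1} as a fixed point equation on the closed ball $B_{R'}(x)$. If $i(x) = 0$ then, since $R' \geq R > 0$, we have $B_{R'}(x) = \lbrace x \rbrace$ and \eqref{eqn2} forces $f(x) = 0$, so $x' = x$ is trivially the unique solution; hence assume $i(x) \neq 0$. Because $R' \geq R$ we have $B_{R'}(x) \subseteq B_R(x)$, so the hypotheses \eqref{eqn11z}, \eqref{eqn11z1}, \eqref{eqn12z} and their analogues for $\hii$ and $\brii$ are available whenever the arguments lie in $B_{R'}(x)$. Define $\Phi : B_{R'}(x) \rightarrow \bbR^d$ by
\[
	\Phi(z) := x + h\, \wS(x,z,h)\, \bii(x,z),
\]
with $\wS$ as in \eqref{eqn4}; then $x' \in B_{R'}(x)$ solves \eqref{disc1} if and only if $\Phi(x') = x'$. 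Since $B_{R'}(x)$ is a non-empty complete metric space, by Theorem \ref{thm banach} it suffices to show, for every $h \in [0,H')$, that (i) $\Phi$ maps $B_{R'}(x)$ into itself and (ii) $\Phi$ is a contraction on $B_{R'}(x)$.

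The first step is a block of pointwise estimates on $B_{R'}(x)$, all homogeneous in $|i(x)|$. For $z \in B_{R'}(x)$ we have $|z - x| \leq |i(x)|/R'$, and combining this with $R' \geq 10\wL$, $h < 1/(10\wL)$, and the Lipschitz-in-the-second-argument and Lipschitz-in-$h$ bounds yields $|\hii(x,z,h) - i(x)| \leq \tsfrac{1}{5}|i(x)|$ and likewise for $\brii$; consequently the denominator in \eqref{eqn4} satisfies $|\hii(x,z,h)\cdot\brii(x,z,h)| \geq \tsfrac{14}{25}|i(x)|^2 > 0$. This is the crucial estimate: the lower bound scales like $|i(x)|^2$, so dividing by it introduces no blow-up as $x$ approaches a critical point of $I$. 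The same reasoning, now invoking \eqref{eqn2}, gives $|\tff(x,z,h)| \leq (C_1 + \tsfrac{1}{5})|i(x)| = C_2 |i(x)|$ — precisely the reason $C_2$ was defined as $C_1 + \tsfrac{1}{5}$ — together with $|\tii(x,z,h)| \leq \tsfrac{6}{5}|i(x)|$ and $|\bii(x,z)| \leq \tsfrac{11}{10}|i(x)|$. Since $(ab^T - ba^T)v = (b\cdot v)a - (a\cdot v)b$, the numerator matrix of \eqref{eqn4} has operator norm at most $2|\tff(x,z,h)|\,|\tii(x,z,h)|$, so $|\wS(x,z,h)\bii(x,z)| \leq 6 C_2 |i(x)|$. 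Hence $|\Phi(z) - x| \leq 6 C_2 h |i(x)| \leq |i(x)|/R'$ as soon as $h \leq 1/(6 C_2 R')$, which holds since $h < H'$; this proves (i).

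For (ii), fix $z_1, z_2 \in B_{R'}(x)$ and write $\Phi(z_1) - \Phi(z_2) = h\,(g(z_1) - g(z_2))$ with $g(z) := N(z)\bii(x,z)/D(z)$, where $N(z)$ is the numerator matrix of \eqref{eqn4} and $D(z) := \hii(x,z,h)\cdot\brii(x,z,h)$, and split
\[
	g(z_1) - g(z_2) = \frac{N(z_1)\bii(x,z_1) - N(z_2)\bii(x,z_2)}{D(z_1)} + \frac{N(z_2)\bii(x,z_2)\,\bigl(D(z_2) - D(z_1)\bigr)}{D(z_1)\,D(z_2)}.
\]
Using bilinearity of $(a,b) \mapsto ab^T - ba^T$ together with the Lipschitz-in-the-second-argument bounds for $\tff$, $\tii$, $\hii$, $\brii$ and $\bii$, both $|N(z_1) - N(z_2)|$ and $|D(z_1) - D(z_2)|$ are bounded by constant multiples of $\wL|z_1 - z_2|\,|i(x)|$, while the pointwise estimates of the previous paragraph control $|N(z_2)|$, $|\bii(x,z_j)|$ and $1/|D(z_j)|$; every factor of $|i(x)|$ cancels, leaving $|g(z_1) - g(z_2)| \leq (36 C_2 + 6)\wL\,|z_1 - z_2|$ (or some explicit constant of this shape). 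Thus $|\Phi(z_1) - \Phi(z_2)| \leq (36 C_2 + 6)\wL h\,|z_1 - z_2|$, a strict contraction whenever $h < 1/((36 C_2 + 6)\wL)$ — the final term defining $H'$. Theorem \ref{thm banach} then produces the unique $x' \in B_{R'}(x)$ with $\Phi(x') = x'$.

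I expect the main obstacle to be the bookkeeping in step (ii): splitting the quotient cleanly, keeping the $|i(x)|$-homogeneity of every term straight so that the cancellations genuinely occur, and checking that the crude constants produced by the triangle inequality and the rank-two estimates are dominated by $36 C_2 + 6$. The conceptual content — and the reason the result is independent of the distance to critical points of $I$ — is that $B_R(x)$ is taken with radius proportional to $|i(x)|$ rather than a fixed radius; with that scaling every perturbation term is proportional to $|i(x)|$, and the three competing upper bounds on $h$ in $H'$ correspond respectively to the denominator lower bound, to the self-mapping property (i), and to the contraction (ii).
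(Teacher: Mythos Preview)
Your proposal is correct and follows essentially the same route as the paper: Banach's fixed point theorem on $B_{R'}(x)$, the same pointwise bounds ($|\tii|\le\tfrac{6}{5}|i(x)|$, $|\bii|\le\tfrac{11}{10}|i(x)|$, $|\tff|\le C_2|i(x)|$, and the denominator $\ge\tfrac{28}{50}|i(x)|^2$, which the paper then crudely replaces by $\tfrac12|i(x)|^2$), the same self-map bound $|\Phi(z)-x|\le 6C_2h|i(x)|$, and the same contraction constant $(36C_2+6)\wL h$. The only cosmetic difference is that the paper organizes the contraction estimate by writing $\wS\bii=\frac{(\tii\cdot\bii)\tff-(\tff\cdot\bii)\tii}{\hii\cdot\brii}$ and telescoping each scalar-times-vector piece separately (obtaining $(18C_2+3)\wL$ for each), whereas you split into a numerator-difference term and a denominator-difference term; both reach the same total, and your caution about the bookkeeping is well placed but not a gap.
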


\begin{proof}
Note that if $x \in B$ and $i(x) = 0$ then $x' = x$ is the unique solution to \eqref{disc1} in $B_{R'}(x) = \lbrace x \rbrace$.  For the case when $i(x) \neq 0$ we will apply Banach's Fixed Point Theorem (Theorem \ref{thm banach}) to prove our result.  Let $R'$ and $H'$ be defined as in the theorem and for fixed $x \in B$, such that $i(x) \neq 0$, define $X := B_{R'}(x)$.  $X$ is a closed subset of $\bbR^d$, so together with the metric $| \cdot |$, it is a complete metric space.  Also fix $h \in [0, H')$, and define $T : X \rightarrow \bbR^d$ by
$$
	T(z) := x + h \wS(x,z,h) \bii(x,z) \qquad \mbox{for all $z \in X$,}
$$	
where $\wS$ is given by \eqref{eqn4}.  To satisfy the assumptions of Theorem \ref{thm banach} we must show that $T(z) \in X$ for all $z \in X$ and that $T$ is a contraction on $(X,|\cdot|)$.

Let $z \in X$.  Using \eqref{eqn12z}, $z \in B_{R'}(x) \subset B_R(x)$ (since $R' \geq R$), $R' \geq 10L$ and $h \leq \tsfrac{1}{10L}$ we have
\begin{equation}
\label{eqn12z1}
\begin{split}
	|\tii(x,z,h)| 
	&\leq |i(x)| + |\tii(x,z,h) - \tii(x,x,h)| + |\tii(x,x,h) - \tii(x,x,0)| \\
	&\leq (1 + \tsfrac{\wL}{R'} + \wL h) |i(x)| 
	\leq \tsfrac{6}{5} |i(x)|.
\end{split}
\end{equation}
We can derive similar inequalities for $\hii$ and $\brii$, and for $\bii$ we can derive
\begin{equation}
\label{eqn12z2}
	|\bii(x,z)| \leq \tsfrac{11}{10} |i(x)|.
\end{equation}
Using \eqref{eqn11z}, \eqref{eqn2}, $C_2 := C_1 + \tsfrac{1}{5}$, $R' \geq 10L$ and $h \leq \tsfrac{1}{10L}$ we also get
\begin{equation}
\label{eqn12z3}
	|\tff(x,z,h)| \leq C_2 |i(x)|.
\end{equation}
Using \eqref{eqn12z} and \eqref{eqn12z1} for $\hii$ and $\brii$, $z \in B_{R'}(x)$, $R' \geq 10L$, $h \leq \tsfrac{1}{10L}$ and writing $i$ instead of $i(x)$ we get
\begin{equation}
\label{eqn17z}
\begin{split}
\hii(x,z,h) \cdot \brii(x,z,h)
	=&|i|^2 + [ (\hii(x,z,h) - \hii(x,x,h)) \\
	&+ (\hii(x,x,h) - \hii(x,x,0))]  \cdot \brii(x,z,h) \\
	& + i \cdot [ (\brii(x,z,h) - \brii(x,x,h)) + (\brii(x,x,h) - \brii(x,x,0))] \\
	\geq& |i|^2 -  (\tsfrac{\wL|i|}{R'} + \wL h |i|) \tsfrac{6}{5} |i|
	- |i| (\tsfrac{\wL|i|}{R'} + \wL h |i|)\\
	\geq& |i|^2 -  \tsfrac{2}{10} \cdot \tsfrac{6}{5} |i|^2 - \tsfrac{2}{10} |i|^2
	= \tsfrac{28}{50} |i|^2 
	> \tsfrac{1}{2} |i|^2.
\end{split}
\end{equation}
We get $T(z) \in X$ from the following inequality, where we have used \eqref{eqn17z}, \eqref{eqn12z3}, \eqref{eqn12z1}, \eqref{eqn12z2} and $h \leq H'$ to get
\begin{align*}
	|T(z) - x| &= h |\wS(x,z,h) \bii(x,z)| 
	= h \left| \tsfrac{ (\tii \cdot \bii) \tff - (\tff \cdot \bii) \tii}{\hii \cdot \brii} \right| \\
	&\leq \tsfrac{4 h}{|i(x)|^2} |\tff(x,z,h)| |\tii(x,z,h)| |\bii(x,z)| \\
	&\leq 4 h C_2 \tsfrac{6}{5} \cdot \tsfrac{11}{10} |i(x)| 
	= \tsfrac{132}{25} C_2 h |i(x)|
	\leq 6 C_2 h |i(x)| 
	\leq \tsfrac{|i(x)|}{R'}.
\end{align*}

To show $T$ is a contraction, let $z,z'\in X$.  Using \eqref{eqn17z}, \eqref{eqn12z} and \eqref{eqn12z1} for $\hii$ and $\brii$, and writing $\hii(x,z,h)$ as $\hii(z)$ etc. we get
\begin{equation}
\label{eqn13z1}
\begin{split}
	\left| \tsfrac{1}{\hii(z)\cdot \brii(z)}  \!-\! \tsfrac{1}{\hii(z')\cdot\brii(z')} \right|
	& = \left|\tsfrac{ \hii(z')\cdot\brii(z') - \hii(z)\cdot\brii(z) }{(\hii(z')\cdot\brii(z')) ( \hii(z)\cdot\brii(z) )} \right| \\
	& \leq \tsfrac{4}{|i(x)|^4} | \hii(z')\cdot\brii(z') - \hii(z)\cdot\brii(z) | \\
	& \leq \tsfrac{4}{|i(x)|^4} \left( | (\hii(z') - \hii(z)) \cdot\brii(z')| + |\hii(z)\cdot(\brii(z') - \brii(z)) | \right) \\
	& \leq \tsfrac{8}{|i(x)|^4} \wL |z-z'|  \tsfrac{6}{5} |i(x)| 
	= \tsfrac{48 \wL}{5 |i(x)|^3} |z-z'| 
	\leq \tsfrac{10 \wL}{|i(x)|^3}|z-z'|. 
\end{split}
\end{equation}
Using \eqref{eqn11z}, \eqref{eqn11z1}, \eqref{eqn12z}, \eqref{eqn12z1}, \eqref{eqn12z2}, \eqref{eqn12z3}, \eqref{eqn17z} and \eqref{eqn13z1} we get 
\begin{equation}
\label{eqn13z2}
\begin{split}
	\Bigl| \tsfrac{\tii(z) \cdot \bii(z)}{\hii(z)\cdot\brii(z)} {\scriptstyle \tff(z) -} \tsfrac{\tii(z') \cdot \bii(z')}{\hii(z')\cdot\brii(z')} {\scriptstyle \tff(z')} \Bigr| 
	 \leq& \left| \tsfrac{[\tii(z) - \tii(z')] \cdot \bii(z) \tff(z)}{\hii(z)\cdot\brii(z)} \right| 
	 + \left| \tsfrac{\tii(z') \cdot [\bii(z) - \bii(z')] \tff(z)}{\hii(z)\cdot\brii(z)} \right| \\
	 &+ \left| \tsfrac{\tii(z')\cdot\bii(z') [\tff(z) - \tff(z')] }{\hii(z)\cdot\brii(z)} \right| \\
	& + \left| \left( \tsfrac{1}{\hii(z)\cdot \brii(z)} - \tsfrac{1}{\hii(z')\cdot\brii(z')} \right) {\scriptstyle \tii(z')\cdot\bii(z') \tff(z') } \right| \\
	\leq& {\scriptstyle \frac{2}{|i(x)|^2}} \bigl( {\scriptstyle \wL |z-z'| \frac{11}{10} |i(x)| C_2 |i(x)|  + \frac{6}{5} |i(x)| \wL |z-z'| C_2 |i(x)| }\\
	& {\scriptstyle+ \frac{6}{5}|i(x)| \frac{11}{10} |i(x)| \wL|z-z'|} \bigr) 
	{\scriptstyle + \frac{10 \wL}{|i(x)|^3} |z-z'| \frac{6}{5} |i(x)| \frac{11}{10} |i(x)| C_2|i(x)|} \\
	=& \left( \tsfrac{89}{5}C_2 + \tsfrac{66}{25} \right) \wL |z-z'|  \\
	\leq& \left( 18C_2 + 3 \right)\wL |z-z'|.
\end{split}
\end{equation}
Using a similar argument we can also derive
\begin{equation}
\label{eqn13z3}
	\Bigl| \tsfrac{\tff(z) \cdot \bii(z)}{\hii(z)\cdot\brii(z)} \tii(z) - \tsfrac{\tff(z') \cdot \bii(z')}{\hii(z')\cdot\brii(z')} \tii(z') \Bigr| 
	\leq 	\left( 18C_2 + 3 \right)\wL |z-z'|.
\end{equation}
Now using \eqref{eqn13z2}, \eqref{eqn13z3} and $h < H'$ we get
\begin{align*}
	|T(z) - T(z')| &\leq h \Bigl| \tsfrac{\tii(z)\cdot\bii(z) \tff(z) - \tff(z)\cdot\bii(z) \tii(z)}{\hii(z)\cdot\brii(z)} - \tsfrac{\tii(z')\cdot\bii(z') \tff(z') - \tff(z')\cdot\bii(z') \tii(z')}{\hii(z')\cdot\brii(z')} \Bigr| \\
	&\leq h \Bigl| \tsfrac{\tii(z) \cdot \bii(z)}{\hii(z)\cdot\brii(z)} \tff(z) - \tsfrac{\tii(z') \cdot \bii(z')}{\hii(z')\cdot\brii(z')} \tff(z') \Bigr|  
	+ h \Bigl| \tsfrac{\tff(z) \cdot \bii(z)}{\hii(z)\cdot\brii(z)} \tii(z) - \tsfrac{\tff(z') \cdot \bii(z')}{\hii(z')\cdot\brii(z')} \tii(z') \Bigr| \\
	&\leq (36 C_2 + 6) \wL h |z-z'|,
\end{align*}
where $(36C_2 + 6) \wL h < 1$.  
Therefore, $T$ is a contraction on $(X,|\cdot|)$ and by Theorem \ref{thm banach} there exists a unique $x' \in X$ such that $T(x') = x'$.   By the definition of $T$ it follows that $x'$ satisfies \eqref{disc1} where $\wS$ is given by \eqref{eqn4}.  
\end{proof}

\section{Order of accuracy}
\label{sec error}

In this section we give sufficient conditions for a discrete gradient method defined by \eqref{disc1} and \eqref{eqn4} to be of order $p$ for arbitrarily chosen $p \in \mathbb{N}$.  In addition to requiring the same conditions as in Theorem \ref{thm exist}, we also require two further conditions.

The following two lemmas will be used to prove our main result, Theorem \ref{thm error2}.

\begin{lemma}
\label{lem exist f}
For a bounded set $B \subset \bbR^d$, let $R,L,H,\tff,C_2,R'$ and $H'$ be defined as in Theorem \ref{thm exist}.  Then, for each fixed $x \in B$ and $h \in [0,H')$ there exists a unique $y \in B_{6R'}(x)$ such that 
\begin{equation}
\label{eqn21z}
	y = x + h \tff(x,y,h).
\end{equation}
\end{lemma}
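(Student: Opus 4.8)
The plan is to obtain $y$ as the fixed point of the map $T(y) := x + h\,\tff(x,y,h)$ via Banach's Fixed Point Theorem (Theorem \ref{thm banach}), mimicking the proof of Theorem \ref{thm exist} but with the substantial simplification that there is now no skew-symmetric factor and no denominator $\hii\cdot\brii$ to control. First I would dispose of the degenerate case $i(x)=0$: then $B_{6R'}(x)=\{x\}$, and since \eqref{eqn2} forces $f(x)=0$, the last line of \eqref{eqn11z} gives $\tff(x,x,h)=\tff(x,x,0)=f(x)=0$, so $T(x)=x$ and $y=x$ is the unique point of $\{x\}$ satisfying \eqref{eqn21z}. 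Henceforth assume $i(x)\neq0$, fix $h\in[0,H')$, and set $X:=B_{6R'}(x)$, a closed and hence complete subset of $(\bbR^d,|\cdot|)$.

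Next I would check that $T$ maps $X$ into itself. Since $R'\ge R$ we have $6R'\ge R$, so $B_{6R'}(x)\subseteq B_R(x)$ and the Lipschitz estimates in \eqref{eqn11z} are valid for the points involved. For $y\in X$, applying those estimates term by term together with $\tff(x,x,0)=f(x)$, \eqref{eqn2}, $|y-x|\le|i(x)|/(6R')$, $R'\ge10\wL$ and $h\le H'\le\tsfrac{1}{10\wL}$,
$$
	|\tff(x,y,h)| \le |f(x)| + \wL|y-x| + \wL h|i(x)| \le \bigl(C_1 + \tsfrac{\wL}{6R'} + \wL h\bigr)|i(x)| \le C_2|i(x)|,
$$
using $\tsfrac{\wL}{6R'}+\wL h\le\tsfrac{1}{60}+\tsfrac{1}{10}<\tsfrac{1}{5}$. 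Hence $|T(y)-x| = h|\tff(x,y,h)| \le C_2 h|i(x)| \le \tsfrac{|i(x)|}{6R'}$, the last inequality because $h\le H'\le\tsfrac{1}{6C_2R'}$; thus $T(y)\in X$.

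Then I would check that $T$ is a contraction: for $y,y'\in X$, the Lipschitz bound in \eqref{eqn11z} gives $|T(y)-T(y')| = h|\tff(x,y,h)-\tff(x,y',h)| \le \wL h|y-y'|$, with $\wL h\le\tsfrac{1}{10}<1$ since $h\le H'\le\tsfrac{1}{10\wL}$. Banach's Fixed Point Theorem then produces a unique $y\in X=B_{6R'}(x)$ with $T(y)=y$, i.e.\ satisfying \eqref{eqn21z}.

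I do not expect a genuine obstacle here; the proof is essentially a stripped-down version of that of Theorem \ref{thm exist}. The only points needing care are bookkeeping: the hypotheses \eqref{eqn11z} are assumed only on $B_R(x)$, so one must use the inclusion $B_{6R'}(x)\subseteq B_R(x)$ (which holds precisely because $R'\ge R$); and the radius $6R'$ in the statement --- rather than $R'$ --- is exactly what the term $\tsfrac{1}{6C_2R'}$ in the definition of $H'$ buys one in the self-mapping step. In particular no new smallness condition on $h$ beyond those already in Theorem \ref{thm exist} is required, which is why the same $\wL$, $R'$ and $H'$ can be reused.
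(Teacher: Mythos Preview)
Your proposal is correct and follows essentially the same route as the paper's proof: both apply Banach's Fixed Point Theorem on $X=B_{6R'}(x)$ to the map $T(z)=x+h\,\tff(x,z,h)$, using $|\tff(x,z,h)|\le C_2|i(x)|$ together with $h\le\tsfrac{1}{6C_2R'}$ for the self-map property and $\wL h\le\tsfrac{1}{10}$ for the contraction. The only difference is that the paper simply cites the bound \eqref{eqn12z3} already established in the proof of Theorem~\ref{thm exist}, whereas you re-derive it from \eqref{eqn11z} and carefully note the inclusion $B_{6R'}(x)\subseteq B_R(x)$; this extra care is warranted but does not change the argument.
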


\begin{proof}
Fix $x \in B$ and $h \in [0,H')$, and define $X := B_{6R'}(x)$ and $T(z):= x + h \tff(x,z,h)$ for each $z \in X$.  For $z \in X$ use \eqref{eqn12z3} and $h \leq \tsfrac{1}{6 C_2 R'}$ to get
$$
	|T(z) - x| = h |\tff(x,z,h)| \leq h C_2  |i(x)| \leq \tsfrac{|i(x)|}{6R'},
$$
so $T(z) \in X$.  For $z' \in X$, use \eqref{eqn11z} and $h \leq \tsfrac{1}{10L}$ to get
$$
	|T(z)-T(z')| = h |\tff(x,z,h) - \tff(x,z',h)| \leq L h |z-z'| \leq \tsfrac{1}{10}|z-z'|.
$$
Hence $T:X \rightarrow X$ is a contraction and the result follows by applying Theorem \ref{thm banach}.  For the case when $i(x)=0$ note that $\tff(x,z,h) = 0$ for $z \in X = \lbrace x \rbrace$ (use \eqref{eqn11z} and $|i(x)| = 0$).
\end{proof}

\begin{lemma}
\label{lem 1}
In addition to the hypotheses of Theorem \ref{thm exist} suppose that for each $x \in B$ and all $u,v \in B_{5R'}(x)$, $f$ satisfies
$$
	|f(u) - f(v)| \leq L |u-v|.
$$
Let $x(\cdot)$ denote the exact solution to \eqref{p1} with $x(t) = x$ for some $t \in \bbR_+$ and $h \in [0,H')$, then $x(s)$ exists and satisfies $x(s) \in B_{5R'}(x)$ for all $s \in [t,t+h]$.
\end{lemma}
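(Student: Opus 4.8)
The plan is to run a standard continuation (bootstrap) argument for the ODE \eqref{p1}. First I would dispose of the degenerate case $i(x)=0$: since $x\in B$, the remark following \eqref{eqn2} forces $f(x)=0$, so the constant function $x(s)\equiv x$ solves \eqref{p1}, and the local Lipschitz continuity of $f$ makes it the unique solution; as $B_{5R'}(x)=\lbrace x\rbrace$ there is nothing further to prove. So from now on assume $i(x)\neq 0$.

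The one quantitative input I would extract up front is a uniform bound for $f$ over the ball. For $u\in B_{5R'}(x)$ the new Lipschitz hypothesis together with $R'\geq 10L$ gives $|f(u)-f(x)|\leq L|u-x|\leq \tsfrac{1}{50}|i(x)|$, and combining this with \eqref{eqn2} (available since $x\in B$) and $C_2=C_1+\tsfrac15$ yields $|f(u)|\leq C_2|i(x)|$ for every $u\in B_{5R'}(x)$. With this in hand the argument is the usual one: let $\tau^*$ be the supremum of those $\tau\in[0,h]$ for which the exact solution $x(\cdot)$ exists on $[t,t+\tau]$ and satisfies $x(s)\in B_{5R'}(x)$ for all $s\in[t,t+\tau]$. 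Since $x(t)=x$ lies in the interior of the genuine ball $B_{5R'}(x)$ and $f$ is locally Lipschitz, $\tau^*>0$. For $s\in[t,t+\tau^*)$ I would integrate \eqref{p1} and invoke the uniform bound together with $h\leq H'\leq\tsfrac{1}{6C_2R'}$ to obtain
\[
	|x(s)-x|\;\leq\;\int_t^s |f(x(\sigma))|\,\dd\sigma\;\leq\;C_2|i(x)|\,h\;\leq\;\tsfrac{|i(x)|}{6R'},
\]
so in fact the trajectory never leaves the smaller ball $B_{6R'}(x)$.

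The main obstacle is simply to close the continuation cleanly. Because $\dot x=f(x(\cdot))$ is bounded on $[t,t+\tau^*)$, the trajectory is Cauchy as $s\to t+\tau^*$, so the limit $x(t+\tau^*)$ exists and, by the displayed estimate, still satisfies $|x(t+\tau^*)-x|\leq\tsfrac{|i(x)|}{6R'}<\tsfrac{|i(x)|}{5R'}$; that is, it lands strictly inside $B_{5R'}(x)$. If $\tau^*<h$, local existence and uniqueness around this interior point, together with continuity, would let me extend the solution a little past $t+\tau^*$ while remaining in $B_{5R'}(x)$, contradicting the definition of $\tau^*$. Hence $\tau^*=h$, and the same estimate evaluated at $s=t+h$ gives $x(t+h)\in B_{6R'}(x)\subset B_{5R'}(x)$. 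The only point that genuinely needs care is that the bootstrap bound produces $\tfrac{1}{6R'}$ rather than $\tfrac{1}{5R'}$: it is precisely this strict inequality that prevents an exit time from occurring in $[t,t+h]$, and everything else is routine Picard--Lindel\"of theory.
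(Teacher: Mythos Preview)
Your proof is correct and follows the same continuation strategy as the paper: show that the trajectory remains strictly inside $B_{5R'}(x)$, then use local existence to extend past any putative stopping time and obtain a contradiction. The one difference is how the key displacement estimate is obtained. The paper bounds $|f(x(s))|$ along the trajectory via Gronwall's inequality, writing $|f(x(s))|\le |f(x)|+L\int_t^s|f(x(r))|\,\dd r$ and concluding $|f(x(s))|\le|f(x)|e^{Lh}<\tfrac{6}{5}C_1|i(x)|$, then integrates to get $|x(s)-x|<|i(x)|/(5R')$. You instead observe that the Lipschitz hypothesis on the ball, together with $R'\ge 10L$ and \eqref{eqn2}, already gives the uniform bound $|f(u)|\le C_2|i(x)|$ for every $u\in B_{5R'}(x)$; this makes Gronwall unnecessary and yields the slightly sharper $|x(s)-x|\le|i(x)|/(6R')$. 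This is a modest simplification, and both routes deliver the strict containment needed to close the bootstrap.
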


\begin{proof}
If $i(x) = 0$ then (by \eqref{eqn2}) we are at a stationary point and the result is trivial.  Suppose $i(x) \neq 0$.  Existence theory for ODEs (see e.g. \cite[Thm. I.7.3 on p. 37]{SolvingODEs1}) implies there exists a $T>t$ such that $x(s) \in B_{5R'}(x)$ for all $s \in [t,T]$.  If $T\geq t + h$ then we are done, so suppose $T <t +h$.  Existence theory also implies that the solution $x(s)$ exists for $s \in [T, T']$ for some $T' > T$, even though it may not be in $B_{5R'}(x)$.

For each $s \in [t,T]$ we have 
$$
	|f(x(s))| \leq |f(x)| + L|x(s)-x(t)| \leq |f(x)| + \int_t^s |f(x(r))| \dd r,
$$
so by the Gronwall Inequality (see e.g. \cite[Thm. 1.1 on p. 24]{hartman}), \eqref{eqn2}, $h < H'$ and $i(x) \neq 0$ we have
$$
	|f(x(s))| \leq |f(x)| \ope^{L h} < C_1 |i(x)| \ope^{1/10} < \tsfrac{6 C_1 |i(x)|}{5}.
$$	
Therefore, for each $s \in [t,T]$,
$$
	|x(s) - x| \leq \int_t^s |f(x(r))| \dd r \leq \tsfrac{6 C_1h |i(x)|}{5} < \tsfrac{6 C_2h |i(x)|}{5} < \tsfrac{|i(x)|}{5 R'}.
$$
Since this inequality is strict and $x(\cdot)$ exists up to $T'$ and is continuous, there exists an $\epsilon > 0$ such that $x(r) \in B_{5R'}(x)$ for all $r \in [t,s+\epsilon]$. 

To complete the proof let us argue by contradiction.  Suppose there exists a $s \in [t, t+h]$ such that $x(s) \notin B_{5R'}(x)$ (this includes the case when $x(t+h)$ does not exist).  Then by continuity of $x(\cdot)$, $x(t) = x$ and since $B_{5R'}(x)$ is closed, there exists a $s' \in [t, s)$ and a $\delta > 0$ such that $x(r) \in B_{5R'}(x)$ for all $r \in [t,s']$ and $x(r) \notin B_{5R'}(x)$ for all $r \in (s',s'+\delta)$.  However, by the above argument there exists an $\epsilon > 0$ such that $x(r) \in B_{5R'}(x)$ for all $r \in [t,s'+\epsilon)$.  A contradiction.  Therefore, $x(s) \in B_{5R'}(x)$ for all $s \in [t,t+h]$.
\end{proof}

The extra Lipschitz continuity condition on $f$ in Lemma \ref{lem 1} follows from our earlier assumption that $f$ is locally Lipschitz.

Now let us present the main theorem of this section, where we show that under certain conditions the discrete gradient method defined by \eqref{disc1} and \eqref{eqn4} is of order $p$, for some $p \in \mathbb{N}$.

\begin{theorem}
\label{thm error2}
For a compact set $B \subset \bbR^d$, let $R$, $L$, $H$, $\tff$, $\bii$, $\tii$, $\hii$ $\brii$, $C_2$, $R'$ and $H'$ be defined as in Theorem \ref{thm exist} and let $f$ satisfy the Lipschitz condition in Lemma \ref{lem 1}.  

For each $x \in B$ and $h \in [0,H')$ 
\begin{enumerate}
\item let $x' \in B_{R'}(x)$ be the unique solution to \eqref{disc1} with $\wS$ defined by \eqref{eqn4} (which exists by Theorem \ref{thm exist}), 
\item let $y \in B_{6R'}(x)$ be the unique solution to \eqref{eqn21z} (which exists by Lemma \ref{lem exist f}), and 
\item let $x(\cdot)$ denote the exact solution to \eqref{p1} satisfying $x(t) = x$ for some $t \in \bbR_+$ (which exists on $[t,t+h]$ by Lemma \ref{lem 1}).
\end{enumerate}
Also suppose that 
\begin{enumerate}
\setcounter{enumi}{3}
\item $\tff$ is such that the method defined by \eqref{eqn21z} is of order $p$ for some $p \in \mathbb{N}$, i.e. there exist positive constants $C_3$ and $H_3 < H'$ such that
\begin{equation}
\label{eqn30z}
	|y - x(t+h)| \leq C_3 h^{p+1}, \quad \mbox{for all $h \in [0,H_3]$ and all $x \in B$,}
\end{equation}
and
\item there exists a positive constant $C_4$ such that for each $x \in B$ and all $h \in [0,H_3]$,
\begin{equation}
\label{eqn30z1}
	|\hii(x,x',h)\cdot \brii(x,x',h) - \tii(x,x',h) \cdot \bii(x,x')| 
	\leq C_4
	\left( |x' - x(t+h)| + h^{p+1} \right) |i(x)|.
\end{equation}
\end{enumerate}
Then the discrete gradient method defined by \eqref{disc1} with $\wS$ given by \eqref{eqn4} is also of order $p$, so that there exist positive constants $C_5$ and $H_5$  such that 
$$
	|x' - x(t+h)| \leq C_5 h^{p+1}, \quad \mbox{for all $h \in [0,H_5]$ and all $x \in B$.}
$$
\end{theorem}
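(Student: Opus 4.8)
The plan is to bound $e := |x' - x(t+h)|$ directly from the defining relations \eqref{disc1}, \eqref{eqn4} and \eqref{eqn21z} and to show that $e \le K_1 h\, e + K_2 h^{p+1}$ for all $h$ in a suitable subinterval of $[0,H_3]$, with $K_1,K_2$ depending only on $B$; absorbing the first term on the right then gives $e \le 2K_2 h^{p+1}$ once $K_1 h \le \tsfrac{1}{2}$. The case $i(x)=0$ is immediate, since then $x'=x$, $f(x)=0$ so $x(\cdot)\equiv x$, and $y=x$; so assume $i(x)\neq 0$ and $h\in[0,H_3]$. First I would note that $x'$, $y$ and $x(s)$ for $s\in[t,t+h]$ all lie in $B_{R'}(x)$: the balls satisfy $B_{6R'}(x)\subseteq B_{5R'}(x)\subseteq B_{R'}(x)$ because $B_R(x)$ shrinks as $R$ grows, so Theorem~\ref{thm exist}, Lemma~\ref{lem exist f} and Lemma~\ref{lem 1} place these points there. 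Hence the bounds derived for points of $B_{R'}(x)$ in the proof of Theorem~\ref{thm exist} are available at each of $x'$, $y$, $x(t+h)$; in particular $|\tff|\le C_2|i(x)|$ from \eqref{eqn12z3}, $|\tii|\le\tsfrac{6}{5}|i(x)|$ from \eqref{eqn12z1}, $|\bii|\le\tsfrac{11}{10}|i(x)|$ from \eqref{eqn12z2}, and $\hii\cdot\brii>\tsfrac{1}{2}|i(x)|^2$ from \eqref{eqn17z}.

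Next I would subtract $x(t+h)-x=\int_t^{t+h}f(x(s))\,\dd s$ from \eqref{disc1} and use \eqref{eqn21z} to rewrite $\int_t^{t+h}f(x(s))\,\dd s=\big(x(t+h)-y\big)+h\,\tff(x,y,h)$, obtaining
$$
	x'-x(t+h)=h\big(\wS(x,x',h)\bii(x,x')-\tff(x,x',h)\big)+h\big(\tff(x,x',h)-\tff(x,y,h)\big)-\big(x(t+h)-y\big).
$$
The last term is $\le C_3h^{p+1}$ by \eqref{eqn30z}; the middle term is $\le Lh\,|x'-y|\le Lh\,(e+C_3h^{p+1})$ by \eqref{eqn11z} and the triangle inequality. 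For the first term, writing all arguments as $(x,x',h)$ and using \eqref{eqn4},
$$
	\wS(x,x',h)\bii(x,x')-\tff(x,x',h)=\frac{(\tii\cdot\bii-\hii\cdot\brii)\,\tff-(\tff\cdot\bii)\,\tii}{\hii\cdot\brii}.
$$
Here the $(\tii\cdot\bii-\hii\cdot\brii)\tff$ part is controlled by hypothesis \eqref{eqn30z1}, which provides exactly the factor $|i(x)|$ that cancels against the denominator $\hii\cdot\brii>\tsfrac{1}{2}|i(x)|^2$, leaving a contribution $\le 2C_2C_4(e+h^{p+1})$ after also using $|\tff|\le C_2|i(x)|$.

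The crux is the remaining piece $\tsfrac{1}{|i(x)|}|\tff(x,x',h)\cdot\bii(x,x')|$; a bound from Lipschitz continuity and consistency of $\tff$ and $\bii$ alone is only $O(h)$, which would limit the method to order $1$. To gain a factor $h^p$ I would use that $I$ is constant along the exact flow, so $\bii(x,x(t+h))\cdot\big(x(t+h)-x\big)=I(x(t+h))-I(x)=0$. Replacing $\bii(x,x')$ by $\bii(x,x(t+h))$ costs at most $|\tff(x,x',h)|\,L e\le C_2L|i(x)|e$ by \eqref{eqn11z1}, while by the orthogonality just noted
$$
	\tff(x,x',h)\cdot\bii(x,x(t+h))=\Big(\tff(x,x',h)-\tsfrac{1}{h}\big(x(t+h)-x\big)\Big)\cdot\bii(x,x(t+h));
$$
and since $\tff(x,x',h)-\tsfrac{1}{h}(x(t+h)-x)=\big(\tff(x,x',h)-\tff(x,y,h)\big)-\tsfrac{1}{h}\big(x(t+h)-y\big)$ by \eqref{eqn21z}, its norm is $\le L(e+C_3h^{p+1})+C_3h^p$ by \eqref{eqn11z} and \eqref{eqn30z}. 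Together with $|\bii(x,x(t+h))|\le\tsfrac{11}{10}|i(x)|$ this gives $\tsfrac{1}{|i(x)|}|\tff(x,x',h)\cdot\bii(x,x')|\le Ke+K'h^p$ with $K,K'$ depending only on $B$.

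Finally I would collect everything, using $|\tii|\le\tsfrac{6}{5}|i(x)|$ for the remaining factor and the compactness of $B$ to bound $|i(x)|$ (so that $h^{p+2}\le H_3h^{p+1}$), arriving at $e\le K_1 h\,e+K_2h^{p+1}$ for all $h\in[0,H_3]$ and all $x\in B$. Taking $H_5:=\min\{H_3,1/(2K_1)\}$ yields $e\le 2K_2h^{p+1}$ on $[0,H_5]$, which is the claim with $C_5:=2K_2$. The one genuinely delicate step is the bound on $\tff(x,x',h)\cdot\bii(x,x')$: instead of estimating $\tff$ and $\bii$ separately, one must play the exact conservation of $I$ along the true solution against the order-$p$ consistency \eqref{eqn30z} of the auxiliary map \eqref{eqn21z} to produce the needed factor $h^p$.
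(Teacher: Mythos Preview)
Your proposal is correct and follows essentially the same route as the paper. Both proofs hinge on the identical two ideas: using the discrete-gradient identity together with $I(x(t+h))=I(x)$ to get $(x(t+h)-x)\cdot\bii(x,x(t+h))=0$, which converts the otherwise $O(h)$ quantity $\tff\cdot\bii$ into an $O(h^{p})|i(x)|$ quantity via \eqref{eqn30z}; and using hypothesis \eqref{eqn30z1} to control the mismatch $(\tii\cdot\bii-\hii\cdot\brii)$ with the crucial factor $|i(x)|$ that cancels against the denominator bound \eqref{eqn17z}. The paper organises this by first isolating $|h\,\tff(x,x',h)\cdot\bii(x,x')|$ as $T_1+T_2+T_3$, then bounding $|x'-y|$ as $T_4+T_5$, and finally applying $|x'-x(t+h)|\le|x'-y|+|y-x(t+h)|$; you instead expand $x'-x(t+h)$ directly and recognise $\wS\bii-\tff=\tfrac{(\tii\cdot\bii-\hii\cdot\brii)\tff-(\tff\cdot\bii)\tii}{\hii\cdot\brii}$. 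These are algebraically equivalent rearrangements of the same estimate, and your closing absorption $e\le K_1he+K_2h^{p+1}\Rightarrow e\le 2K_2h^{p+1}$ matches the paper's final step. (One minor remark: the appeal to compactness of $B$ to bound $|i(x)|$ is not actually needed, since every occurrence of $|i(x)|$ cancels; the $h^{p+2}$ terms are absorbed simply via $h\le H_3$.)
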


\begin{proof}
Define $0 < H_5 \leq \min \lbrace H_3, \tsfrac{1}{30 C_2 C_4} \rbrace < H'$ and $C_5 = \tsfrac{5 C_4}{24L} + \tsfrac{51 C_3}{4}$.  Fix $x \in B$ and $h \in [0,H_5]$.  The first step in the proof is to bound $|h \tff(x,x',h) \cdot \bii(x,x')|$.  We get
\begin{equation}
\label{eqn23z}
\begin{split}
	|h \tff(x,x',h) \cdot \bii(x,x') |  
	\leq& | h \tff(x,y,h) \cdot \bii(x,x(t+h))| \\
	& + | h \tff(x,y,h)\cdot [ \bii(x,x') - \bii(x,x(t+h)]|  \\
	& + | h [\tff(x,x',h) - \tff(x,y,h)] \cdot \bii(x,x') | \\
	=:& T_1 + T_2 + T_3.
\end{split}
\end{equation}
Now bound each $T_i$ separately.  Using $(x(t+h)-x)\cdot \bii(x,x(t+h))= I(x(t+h)) - I(x) = 0$, \eqref{eqn30z} and \eqref{eqn12z2} (with $z = x(t+h) \in B_{5R'}(x) \subset B_{R'}(x)$) we get
\begin{equation}
\label{eqn21az}
\begin{split}
	T_1 
	:=& |h \tff(x,y,h) \cdot \bii(x,x(t+h))| 
	 = | (y-x) \cdot \bii(x,x(t+h))| \\
	=& | (y - x(t+h)) \cdot \bii(x,x(t+h))| \\
	\leq& \tsfrac{11}{10} C_3 h^{p+1} |i(x)|.
\end{split}
\end{equation}
Using \eqref{eqn12z3} (with $z = y \in B_{6R'}(x) \subset B_{R'}(x)$), \eqref{eqn11z1} and $h \leq H' \leq \tsfrac{1}{36 C_2 L}$ we get
\begin{equation}
\label{eqn31z}
\begin{split}
	T_2 :=&  | h \tff(x,y,h)\cdot [ \bii(x,x') - \bii(x,x(t+h)]| \\
	\leq& h C_2 |i(x)| L |x' - x(t+h)| \\
	\leq& \tsfrac{1}{36} |i(x)| |x' - x(t+h)|.
\end{split}
\end{equation}
And using \eqref{eqn11z}, \eqref{eqn12z2} (with $z = x' \in B_{R'}(x)$) and $h \leq H' \leq \tsfrac{1}{10L}$ we get
\begin{equation}
\label{eqn32z}
\begin{split}
	T_3 &:= | h [\tff(x,x',h) - \tff(x,y,h)] \cdot \bii(x,x') | \\
	&\leq h L |x'-y| \tsfrac{11}{10} |i(x)| \\
	&\leq \tsfrac{11}{100} |i(x)| |x'-y|.
\end{split}
\end{equation}
Putting \eqref{eqn23z} together with \eqref{eqn21az}, \eqref{eqn31z} and \eqref{eqn32z} we get
\begin{equation}
\label{eqn33z}
|h \tff(x,x',h) \cdot \bii(x,x') | \leq \left( \tsfrac{11}{10} C_3 h^{p+1} + \tsfrac{1}{36} |x' - x(t+h)| + \tsfrac{11}{100} |x'-y| \right) |i(x)|.
\end{equation}
Now consider bounding $|x' - y|$.  We get
\begin{equation}
\label{eqn34z}
\begin{split}
	|x'-y| &= {\scriptstyle \left| h \tff(x,x',h) \frac{\tii(x,x',h) \cdot \bii(x,x')}{\hii(x,x',h) \cdot \brii(x,x',h)} 
	- h \tii(x,x',h) \frac{\tff(x,x',h) \cdot \bii(x,x')}{\hii(x,x',h) \cdot \brii(x,x',h)} 
	- h \tff(x,y,h) \right| } \\
	&\leq {\scriptstyle h \left| \tff(x,x',h) \tsfrac{\tii(x,x',h) \cdot \bii(x,x')}{\hii(x,x',h) \cdot \brii(x,x',h)} 
	-  \tff(x,y,h) \right| 
	 + \left| \tii(x,x',h) \tsfrac{h \tff(x,x',h) \cdot \bii(x,x')}{\hii(x,x',h) \cdot \brii(x,x',h)} \right| }\\
	 &=: T_4 + T_5.
\end{split}
\end{equation}
Now bound $T_4$ and $T_5$ separately.  Using \eqref{eqn11z}, \eqref{eqn12z3}, \eqref{eqn17z}, \eqref{eqn12z1}, \eqref{eqn12z2} (with $z = x' \in B_{R'}(x)$ and $z = y \in B_{R'}(x)$), \eqref{eqn30z1} and $h\leq \min\lbrace H_3, \tsfrac{1}{10L}, \tsfrac{1}{36 C_2 L} \rbrace$ we get
\begin{equation}
\label{eqn35z}
\begin{split}
	T_4 &:= h \left| \tff(x,x',h) \tsfrac{\tii(x,x',h) \cdot \bii(x,x')}{\hii(x,x',h) \cdot \brii(x,x',h)} 
	-  \tff(x,y,h) \right| \\
	&\leq {\scriptstyle h \left| (\tff(x,x',h) - \tff(x,y,h)) 
	\frac{\tii(x,x',h) \cdot \bii(x,x')}{\hii(x,x',h) \cdot \brii(x,x',h)} \right|
	+ h \left| \tff(x,y,h) \left( \frac{\tii(x,x',h) \cdot \bii(x,x')}{\hii(x,x',h) \cdot \brii(x,x',h)} - 1  \right)  \right| }\\
	&\leq h L |x'-y| \left| \tsfrac{\tii(x,x',h) \cdot \bii(x,x')}{\hii(x,x',h) \cdot \brii(x,x',h)} \right|
	+ h C_2 |i(x)| \left| \tsfrac{\tii(x,x',h) \cdot \bii(x,x')}{\hii(x,x',h) \cdot \brii(x,x',h)} - 1 \right|  \\
	&\leq 2 h L |x'-y| \tsfrac{6}{5}\cdot \tsfrac{11}{10} 
	+ 2 C_2 C_4 h (|x'-x(t+h)| + h^{p+1})  \\
	&\leq \tsfrac{3}{10} |x'-y| + 2C_2 C_4 h  |x' - x(t+h)| + \tsfrac{C_4}{18L} h^{p+1}. 
\end{split}
\end{equation}
And using \eqref{eqn17z} (with $z = x'$), \eqref{eqn12z1} and \eqref{eqn33z} we get
\begin{equation}
\label{eqn37z}
\begin{split}
	T_5 &:=  \left| \tii(x,x',h) \tsfrac{h \tff(x,x',h) \cdot \bii(x,x')}{\hii(x,x',h) \cdot \brii(x,x',h)} \right| \\
	&\leq \tsfrac{12}{5|i(x)|} |h \tff(x,x',h) \cdot \bii(x,x') | \\
	&\leq \tsfrac{132}{50} C_3 h^{p+1} + \tsfrac{1}{15} |x' - x(t+h)| + \tsfrac{132}{500} |x' - y| \\
	&\leq 3 C_3 h^{p+1} + \tsfrac{1}{15} |x'-x(t+h)| + \tsfrac{3}{10} |x'-y|.
\end{split}
\end{equation}
Putting \eqref{eqn34z} together with \eqref{eqn35z} and \eqref{eqn37z} we get
$$
	|x'-y| \leq \tsfrac{3}{5} |x'-y| + \left( 2 C_2 C_3 h + \tsfrac{1}{15} \right) |x' + x(t+h)| + \left( \tsfrac{C_4}{18L} + 3C_3 \right) h^{p+1},
$$
and hence
\begin{equation}
\label{eqn38z}
	|x'-y| \leq \left( 5 C_2 C_4 h + \tsfrac{1}{6} \right) |x' + x(t+h)| + \left( \tsfrac{5 C_4}{36L} + \tsfrac{15C_3}{2} \right) h^{p+1}.
\end{equation}
Our result now follows easily from \eqref{eqn38z} and \eqref{eqn30z} since $h \leq \min \lbrace H_3, \tsfrac{1}{30 C_2 C_4} \rbrace$:
\begin{align*}
	|x'-x(t+h)| 
	&\leq |x' - y| + |y - x(t+h)| \\
	&\leq \left( 5 C_2 C_4 h + \tsfrac{1}{6} \right) |x' + x(t+h)| + \left( \tsfrac{5 C_4}{36 L} + \tsfrac{17C_3}{2} \right) h^{p+1} \\
	&\leq \tsfrac{ 1}{3} |x'-x(t+h)| + \left( \tsfrac{5 C_4}{36 L} + \tsfrac{17C_3}{2} \right) h^{p+1}.
\end{align*}
Therefore,
$$
	|x'-x(t+h)| \leq \tsfrac{3}{2} \left( \tsfrac{5 C_4}{36 L} + \tsfrac{17C_3}{2} \right) h^{p+1} = \left( \tsfrac{5 C_4}{24L} + \tsfrac{51 C_3}{4} \right) h^{p+1} = C_5 h^{p+1}.
$$

\end{proof}

\section{Discrete gradient methods for computation}
\label{sec modified}

In this section we consider discrete gradient methods that may be used in computations, and how we might choose $\tff$, $\tii$, $\bii$, $\hii$ and $\brii$ so that they satisfy the hypotheses of Theorems \ref{thm exist} and \ref{thm error2}.  We also consider how to make the nonlinear system of equations defined by \eqref{disc1} and \eqref{eqn4} as easy as possible to solve at each time step.  In the case when $I$ is quadratic we achieve this by constructing a discrete gradient method so that it is linearly implicit in $x'$.  In general this is not possible, except in the case when $I$ is quadratic.

Since $f(x)$ and $i(x)$ are locally Lipschitz continuous, for a given bounded set $B \subset \bbR^d$ and constant $R_0 > 0$ there exists a constant $L_0>0$ such that for all $x \in B$ and for all $u,v \in B_{R_0}(x)$ 
\begin{equation}
\label{eqn40z}
\begin{split}
	|f(u) - f(v)| &\leq L_0 |u-v|, \\ 
	|i(u) - i(v)| &\leq L_0 |u-v|.
\end{split}
\end{equation}

A possible choice for $\tff$ is so that the method defined by \eqref{eqn21z} is a Runge-Kutta method.  The following definition of a Runge-Kutta method has been modified from \cite[chap. II]{HLW} for the case of autonomous ODEs.

\begin{definition}
Let $b_i$, $a_{ij}$ $(i,j=1,\dotsc,s)$ be real numbers and let $h>0$ be the time step.  One step of an \emph{$s$-stage Runge-Kutta method} defining a map $x \mapsto x'$ for approximating the solution to \eqref{p1} is defined by 
\begin{align}
	k_i & = f\left( x + h \sum_{j=1}^s a_{ij} k_j \right) \qquad i=1,\dotsc,s, \label{eqn41z} \\
	x' &= x + h \sum_{i=1}^s b_i k_i. \label{eqn41zz}
\end{align}
\end{definition}

For $\tff$ in \eqref{eqn21z} to correspond to an $s$-stage Runge-Kutta method then we must define 
\begin{equation}
\label{eqn45z}
	\tff = \tff(x,h) := \sum_{i=1}^s b_i k_i, 
\end{equation}
where the $k_i$ are (implicitly) defined by \eqref{eqn41z}.  Even though the $k_i$ may be implicitly defined by \eqref{eqn41z}, as for Implicit Runge-Kutta methods, we may use the map $\cK : \bbR^d \times \bbR_+ \rightarrow \bbR^{d \times s}$ (defined below in Lemma \ref{lem rk exist}) to explicitly represent each $k_i$ in terms of $x$ and $h$ as the $i^{\rm th}$ column of $\cK(x,h)$.  Since the $k_i$ do not depend on $x'$ we get a $\tff$ that does not depend on $x'$ and we may write $\tff = \tff(x,h)$ instead of $\tff = \tff(x,x',h)$.

For a given $s$-stage Runge-Kutta method two constants that will be useful are $$
	A_1 := \max_i \sum_{j=1}^s |a_{ij}| \quad \mbox{and} \quad A_2 := \sum_{i=1}^s |b_i|.
$$

For completeness, the following lemma gives conditions on $h$ so that an $s$-stage Runge-Kutta method is well-defined in the sense that the map $x \mapsto \lbrace k_i : i=1,\dotsc, s \rbrace$ is locally well-defined (so that the map $x \mapsto x'$ is also locally well-defined).  Although it is a bespoke result for this paper, the proof is very similar to that given for \cite[Thm. II.7.2]{SolvingODEs1}.

\begin{lemma}
\label{lem rk exist}
Let $B \subset \bbR^d$ be a bounded set and let $R_0>0$ be a constant.  Let $L_0$ be the constant from \eqref{eqn40z}, and let $C_1$ be the constant from \eqref{eqn2}.  Define 
$$
	H := \min \lbrace \tsfrac{1}{L_0 A_1}, \tsfrac{1}{2A_1(C_1 + L_0/R_0)R_0} , \tsfrac{1}{2L_0 A_1 (C_1 + L_0/R_0)R_0} \rbrace.
$$  
Then for each $x \in B$, $h \in [0,H)$ and $u \in B_{2R_0}(x)$ there exists a unique $K = [k_1 k_2 \dotsi k_s] \in \lbrace [m_1 m_2 \dotsi m_s] \in \bbR^{d\times s} : |m_i - f(x) | \leq \tsfrac{L_0|i(x)|}{R_0} \rbrace$ satisfying 
\begin{equation}
\label{eqn45zz}
	k_i  = f\left( u + h \sum_{j=1}^s a_{ij} k_j \right) \qquad i=1,\dotsc,s.
\end{equation}
Let us define a map $\cK : \bbR^d \times \bbR_+ \rightarrow \bbR^{d \times s}$ such that $K = \cK(u,h)$.
\end{lemma}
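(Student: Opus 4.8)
The plan is to prove this by a direct application of Banach's Fixed Point Theorem (Theorem~\ref{thm banach}), in the spirit of the proof of \cite[Thm.~II.7.2]{SolvingODEs1}. Fix $x \in B$, $h \in [0,H)$ and $u \in B_{2R_0}(x)$. Work in $\bbR^{d\times s}$ with the norm $\nrm{M} := \max_i |m_i|$, where $m_1,\dotsc,m_s$ denote the columns of $M$, and set
\[
	\mathcal{M} := \lbrace M \in \bbR^{d\times s} : |m_i - f(x)| \leq \tsfrac{L_0|i(x)|}{R_0} \ \text{for}\ i=1,\dotsc,s \rbrace,
\]
which is a non-empty closed ball in $(\bbR^{d\times s},\nrm{\cdot})$ and hence a complete metric space. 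Define $\Phi : \mathcal{M} \to \bbR^{d\times s}$ column-wise by letting the $i$th column of $\Phi(M)$ be $f\bigl( u + h\sum_{j=1}^s a_{ij}m_j \bigr)$. Then an element of $\mathcal{M}$ satisfying \eqref{eqn45zz} is exactly a fixed point of $\Phi$ in $\mathcal{M}$, so it remains to verify the two hypotheses of Theorem~\ref{thm banach}: that $\Phi$ maps $\mathcal{M}$ into itself, and that $\Phi$ is a contraction.

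First I would establish $\Phi(\mathcal{M}) \subseteq \mathcal{M}$. The key sub-step is to keep every stage argument $u + h\sum_{j=1}^s a_{ij}m_j$ inside $B_{R_0}(x)$ so that \eqref{eqn40z} may be applied. For $M \in \mathcal{M}$, the bound defining $\mathcal{M}$ together with $|f(x)| \leq C_1|i(x)|$ from \eqref{eqn2} gives $|m_j| \leq (C_1 + L_0/R_0)|i(x)|$; hence, using $|u-x| \leq \tsfrac{|i(x)|}{2R_0}$ and $h < H \leq \tsfrac{1}{2A_1(C_1+L_0/R_0)R_0}$,
\[
	\Bigl| u + h{\textstyle\sum_{j=1}^s} a_{ij}m_j - x \Bigr| \leq \tsfrac{|i(x)|}{2R_0} + hA_1(C_1+L_0/R_0)|i(x)| \leq \tsfrac{|i(x)|}{R_0},
\]
so this point lies in $B_{R_0}(x)$ and \eqref{eqn40z} yields $\bigl| (\Phi(M))_i - f(x) \bigr| \leq L_0\tsfrac{|i(x)|}{R_0}$, i.e.\ $\Phi(M) \in \mathcal{M}$.

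For the contraction property, given $M,N \in \mathcal{M}$ all of the stage arguments lie in $B_{R_0}(x)$ by the same estimate, so \eqref{eqn40z} together with $h < H \leq \tsfrac{1}{L_0 A_1}$ gives
\[
	\nrm{\Phi(M) - \Phi(N)} \leq \max_i L_0 h {\textstyle\sum_{j=1}^s} |a_{ij}|\,|m_j - n_j| \leq L_0 h A_1 \nrm{M-N},
\]
with contraction constant $L_0 h A_1 < 1$. Theorem~\ref{thm banach} then produces the unique fixed point $K \in \mathcal{M}$, which is the asserted solution of \eqref{eqn45zz}; since \eqref{eqn45zz} does not involve $x$ we may write $\cK(u,h) := K$. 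The case $i(x)=0$ is trivial, since then $u = x$, $f(x) = 0$, $\mathcal{M} = \lbrace 0 \rbrace$ and $K = 0$.

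I do not expect a genuine obstacle: the lemma is routine bookkeeping with the constants $L_0$ and $C_1$. The only point requiring care -- and the reason the balls $B_R(\cdot)$ have radius proportional to $|i(x)|$ and $H$ is taken so small -- is that the estimates above must hold uniformly over $x \in B$, in particular near critical points of $I$ where $|i(x)|$ is small. Keeping all stage arguments inside $B_{R_0}(x)$ is exactly what allows the \emph{single} Lipschitz constant $L_0$ from \eqref{eqn40z} to be used throughout, so that the resulting time-step threshold $H$ does not degenerate as $|i(x)| \to 0$; the three terms in the definition of $H$ are precisely those needed to make the self-map and contraction estimates close.
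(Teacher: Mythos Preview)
Your proposal is correct and follows essentially the same argument as the paper: both apply Banach's Fixed Point Theorem on the same closed ball in $(\bbR^{d\times s},\nrm{\cdot})$ with $\nrm{M}=\max_i|m_i|$, first bounding $|m_j|\leq (C_1+L_0/R_0)|i(x)|$ to force the stage arguments into $B_{R_0}(x)$, then using \eqref{eqn40z} to get both the self-map property and the contraction constant $hL_0A_1<1$. Your additional remarks on the degenerate case $i(x)=0$ and on the role of the three terms in $H$ are accurate but not present in the paper's proof.
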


\begin{proof}
We again apply Banach's Fixed Point Theorem (Theorem \ref{thm banach}).  Fix $x\in B$, $h \in [0,H)$ and $u \in B_{2R_0}(x)$.  For $K = [k_1 k_2 \dotsi k_s] \in \bbR^{d \times s}$ let $\nrm{K} = \max_i |k_i|$ and define $X :=\lbrace [m_1 m_2 \dotsi m_s] \in \bbR^{d\times s} : | m_i - f(x) | \leq \tsfrac{L_0|i(x)|}{R_0} \rbrace$ and $T: X \rightarrow \bbR^{d \times s}$ by 
$$
	T(K) := [l_1 l_2 \dotsi l_s] \qquad \mbox{where} \qquad l_i := f\left( u + h \sum_{j=1}^s a_{ij} k_j \right)
$$
for each $K = [k_1 k_2 \dotsi k_s] \in X$.  To apply Theorem \ref{thm banach} we must show that $T(K) \in X$ for all $K \in X$, and that $T : X \rightarrow X$ is a contraction.

Let $K = [k_1 k_2 \dotsi k_s] ,K' = [k_1' k_2' \dotsi k_s'] \in X$, $T(K) = [l_1 l_2 \dotsi l_s]$ and $T(K') = [l_1' l_2' \dotsi l_s']$.  First note that by the definition of $X$ and using \eqref{eqn2} we have 
\begin{equation}
\label{eqn46z}
	\nrm{K} \leq \max_i (|f(x)| + |k_i - f(x)|) \leq |f(x)| + \tsfrac{L_0|i(x)|}{R_0} \leq \left( C_1 + \tsfrac{L_0}{R_0} \right) |i(x)|.
\end{equation}
Using this, and since $h \leq \tsfrac{1}{2A_1(C_1+L_0/R_0)R_0}$ and $u \in B_{2R_0}(x)$, we have
\begin{equation}
\label{eqn47z}
	\left| u + h \sum_{j=1}^s a_{ij} k_j - x \right| 
	\leq |u-x| + h A_1 \nrm{K} 
	\leq \tsfrac{|i(x)|}{2R_0} + h A_1 \left(C_1 + \tsfrac{L_0}{R_0} \right) |i(x)|
	\leq \tsfrac{|i(x)|}{R_0}.
\end{equation}	
Hence $u + h \sum_{j=1}^s a_{ij} k_j \in B_{R_0}(x)$.  Using \eqref{eqn40z} and \eqref{eqn47z} we then get
$$
	|l_i - f(x)| 
	\leq L_0 \left| u + h \sum_{j=1}^s a_{ij} k_j - x \right| 
	\leq \tsfrac{L_0|i(x)|}{R_0}.
$$
Hence $T(K) \in X$.  By \eqref{eqn40z} we also have
$$
	|l_i - l_i'| \leq h L_0 \left| \sum_{j=1}^{s} a_{ij} (k_j - k_j') \right| \leq h L_0 A_1 \nrm{K-K'},
$$
so $\nrm{T(K) - T(K')} < h L_0 A_1 \nrm{K-K'}$.  Since $h < \tsfrac{1}{L_0 A_1}$, $T:X \rightarrow X$ is a contraction and the result then follows by applying Theorem \ref{thm banach}.
\end{proof}

The following lemma is a technical result for the subsequent corollary.

\begin{lemma}
\label{lem tff}
Let $\tff(x,h)$ be defined by \eqref{eqn45z}, where the $k_i$ are defined by an $s$-stage Runge-Kutta method with $\sum_{i=1}^s b_i = 1$.  Let $B \subset \bbR^d$ be a bounded set and $R_0 >0$ a constant.  Let $L_0$ be the constant from \eqref{eqn40z} and $C_1$ be the constant from \eqref{eqn2}.  

Then $\tff(x,0) = f(x)$ for every $x \in B$, and there exist positive constants $R$, $L$ and $H$ such that for each $x \in B$, and for all $u,v \in B_R(x)$ and $h\in [0,H)$
\begin{align*}
	|\tff(u,h) - \tff(v,h)| &\leq L |u-v|, \\
	|\tff(x,h) - \tff(x,0)| &\leq L h |i(x)|.
\end{align*}
\end{lemma}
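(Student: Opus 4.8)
The idea is to realise $\tff$ through the map $\cK$ supplied by Lemma~\ref{lem rk exist} and to convert the fixed-point estimates used there into the two required bounds. Concretely: apply Lemma~\ref{lem rk exist} with the given $B$ and $R_0$, let $L_0$ be the constant from \eqref{eqn40z} and $C_1$ the constant from \eqref{eqn2}, and call the resulting step bound $H_0$. For $x\in B$, $h\in[0,H_0)$ and $u\in B_{2R_0}(x)$ let $k_i(u,h)$ denote the $i$-th column of $\cK(u,h)$, so that the stage equations \eqref{eqn45zz} hold, $|k_i(u,h)-f(x)|\le L_0|i(x)|/R_0$, and, by \eqref{eqn45z}, $\tff(u,h)=\sum_{i=1}^s b_i k_i(u,h)$. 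I would then set $R:=2R_0$, so that $B_R(x)\subseteq B_{2R_0}(x)$, and choose $L$ and (a shrunken) $H$ as below.

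The identity $\tff(x,0)=f(x)$ is immediate: at $h=0$ equation \eqref{eqn45zz} forces $k_i(x,0)=f(x)$ for every $i$, and $\sum_i b_i=1$ gives $\tff(x,0)=\sum_i b_i f(x)=f(x)$.

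For the spatial Lipschitz bound, fix $x\in B$, $h\in[0,H)$ (with $H$ still to be pinned down) and $u,v\in B_R(x)$, and write $k_i:=k_i(u,h)$, $k_i':=k_i(v,h)$. Subtracting the two stage equations \eqref{eqn45zz}, and noting that $u+h\sum_j a_{ij}k_j$ and $v+h\sum_j a_{ij}k_j'$ both lie in $B_{R_0}(x)$ by the containment \eqref{eqn47z} already proved inside Lemma~\ref{lem rk exist}, the estimate \eqref{eqn40z} gives $|k_i-k_i'|\le L_0\bigl(|u-v|+hA_1\max_j|k_j-k_j'|\bigr)$. Maximising over $i$ and absorbing the $hA_1$ term requires shrinking the step bound to $H:=\min\{H_0,\tfrac{1}{2L_0A_1}\}$; then $\max_i|k_i-k_i'|\le 2L_0|u-v|$, hence $|\tff(u,h)-\tff(v,h)|\le A_2\max_i|k_i-k_i'|\le 2A_2L_0|u-v|$. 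For the temporal bound I would instead compare $k_i(x,h)$ with $k_i(x,0)=f(x)$: since $x+h\sum_j a_{ij}k_j(x,h)\in B_{R_0}(x)$, \eqref{eqn40z} gives $|k_i(x,h)-f(x)|\le L_0 h A_1\max_j|k_j(x,h)|$, and the a priori bound \eqref{eqn46z}, namely $\max_j|k_j(x,h)|\le(C_1+L_0/R_0)|i(x)|$, upgrades this to $|k_i(x,h)-f(x)|\le L_0A_1(C_1+L_0/R_0)\,h|i(x)|$; summing against the $b_i$ yields $|\tff(x,h)-\tff(x,0)|\le A_2L_0A_1(C_1+L_0/R_0)\,h|i(x)|$. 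Taking $L:=\max\{2A_2L_0,\ A_1A_2L_0(C_1+L_0/R_0)\}$ then finishes the proof; the degenerate case $i(x)=0$, where $B_R(x)=\{x\}$ and every $k_i(x,h)=f(x)=0$, is trivial.

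There is no serious obstacle here — the work is bookkeeping carried over from Lemma~\ref{lem rk exist}. The two points needing genuine care are (i) checking that the perturbed base points $u+h\sum_j a_{ij}k_j$ stay inside $B_{R_0}(x)$, so that the \emph{single} Lipschitz constant $L_0$ of \eqref{eqn40z} may be applied to the stage differences, and (ii) shrinking $H$ enough that the contraction factor $L_0hA_1$ is bounded away from $1$, so that the constant $L$ is uniform over $h\in[0,H)$.
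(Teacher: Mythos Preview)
Your proposal is correct and follows essentially the same route as the paper: the same choice $R=2R_0$, the same contraction argument on the stage differences (yielding $\max_i|k_i-k_i'|\le 2L_0|u-v|$ once $h\le \tfrac{1}{2L_0A_1}$), and the same use of \eqref{eqn46z} for the temporal bound, leading to the identical constant $L=L_0A_2\max\{2,A_1(C_1+L_0/R_0)\}$. Your $H=\min\{H_0,\tfrac{1}{2L_0A_1}\}$ with $H_0$ the step bound from Lemma~\ref{lem rk exist} unpacks to exactly the paper's $H$, so even the bookkeeping matches.
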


\begin{proof}
Define $R := 2 R_0$, $L := L_0 A_2 \max \lbrace 2, A_1 (C_1 + \tsfrac{L_0}{R_0}) \rbrace$ and 
$$
	H := \min \lbrace \tsfrac{1}{2L_0 A_1} , \tsfrac{1}{2A_1(C_1 + L_0/R_0)R_0} , \tsfrac{1}{2L_0 A_1 (C_1 + L_0/R_0)R_0} \rbrace,
$$
and fix $x \in B$.  If $h = 0$, then $k_i = k_i(x,0) = f(x)$ for all $i$, and using $\sum_{i=1}^s b_i = 1$ we get $\tff(x,0) = \sum_{i=1}^s b_i f(x) = f(x)$.

For $h \in (0,H)$ and $u,v \in B_R(x)$, we have from Lemma \ref{lem rk exist} that there exist unique $K_u = \cK(u,h)$ and $K_v = \cK(v,h)$ in $\lbrace [m_1 m_2 \dotsi m_s] \in \bbR^{d\times s} : | m_i - f(x) | \leq \tsfrac{L_0|i(x)|}{R_0} \rbrace$ satisfying \eqref{eqn45zz}.  Using \eqref{eqn45zz}, \eqref{eqn40z} and $h \leq \tsfrac{1}{2L_0 A_1}$, we have
\begin{equation}
\begin{split}
	\nrm{K_u - K_v} 
	&\leq L_0 |u-v| + h L_0 A_1 \nrm{K_u - K_v} \\
	&\leq L_0 |u-v| + \tsfrac{1}{2} \nrm{K_u - K_v}.
\end{split}
\end{equation}
Hence, $\nrm{K_u - K_v} \leq 2 L_0 |u-v|$.  Using this, \eqref{eqn45z} and $A_2 := \sum_{i=1}^s |b_i|$ we get
$$
	|\tff(u,h) - \tff(v,h)| 
	\leq A_2 \nrm{K_u - K_v} \leq 2 L_0 A_2 |u-v| \leq L |u-v|.
$$
Finally, using \eqref{eqn45z}, $\sum_{i=1}^s b_i = 1$, \eqref{eqn40z}, \eqref{eqn46z} and writing $k_i(x,h)$ for the $i^{\rm th}$ column of $\cK(x,h)$,
\begin{align*}
	|\tff(x,h) - \tff(x,0)| 
	&= \left| \sum_{i=1}^s b_i k_i(x,h) - \sum_{i=1}^s b_i f(x) \right| \\ 
	&\leq A_2 \max_i |k_i(x,h) - f(x)| \\
	&= A_2 \max_i \left| f\left(x + h \sum_{j=1}^s a_{ij} k_j(x,h) \right)- f(x) \right| \\
	&\leq A_2 L_0 h A_1 \nrm{K} 
	\leq A_2 L_0 h A_1 \left( C_1 + \tsfrac{L_0}{R_0} \right) |i(x)| 
	\leq L h |i(x)|.
\end{align*}
\end{proof}

\begin{corollary}
All consistent Runge-Kutta methods (so that $\sum_{i=1}^s b_i = 1$) define a $\tff$ (see \eqref{eqn45z}) that satisfies condition \eqref{eqn11z} in Theorem \ref{thm exist}.
\end{corollary}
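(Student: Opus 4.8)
The plan is to read the corollary off Lemma \ref{lem tff}, after making the elementary observation that the $\tff$ produced by a Runge--Kutta method through \eqref{eqn45z} does not depend on the second argument $x'$ at all.

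First I would fix an arbitrary constant $R_0 > 0$ (for concreteness, $R_0 = 1$ will do) and apply Lemma \ref{lem tff} with this $R_0$ and the given bounded set $B$. Since a consistent Runge--Kutta method satisfies $\sum_{i=1}^s b_i = 1$, the hypotheses of Lemma \ref{lem tff} are met, and it yields positive constants $R$, $L$ and $H$ such that $\tff(x,0) = f(x)$ for all $x \in B$, and such that $|\tff(u,h) - \tff(v,h)| \leq L|u-v|$ and $|\tff(x,h) - \tff(x,0)| \leq L h |i(x)|$ for all $x \in B$, all $u,v \in B_R(x)$ and all $h \in [0,H)$. Because the stage vectors $k_i$ in \eqref{eqn41z} are determined by $x$ and $h$ alone --- this is exactly the content of the map $\cK$ furnished by Lemma \ref{lem rk exist} --- the quantity $\tff = \sum_{i=1}^s b_i k_i$ of \eqref{eqn45z} is a function of $(x,h)$ only, and to match the signature $\tff : \bbR^d \times \bbR^d \times \bbR_+ \to \bbR^d$ used in Theorem \ref{thm exist} we simply set $\tff(x,x',h) := \tff(x,h)$.

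Then I would verify the four lines of \eqref{eqn11z} in turn, using the same constants $R$, $L$, $H$. The consistency identity $\tff(x,x,0) = \tff(x,0) = f(x)$ is the first conclusion of Lemma \ref{lem tff}. The Lipschitz bound in the first slot follows immediately, since $|\tff(u,v,h) - \tff(w,v,h)| = |\tff(u,h) - \tff(w,h)| \leq L|u-w|$ whenever $u,w \in B_R(x)$. The Lipschitz bound in the second slot is vacuous: $\tff(u,v,h) - \tff(u,w,h) = 0 \leq L|v-w|$. Finally, $|\tff(x,x,h) - \tff(x,x,0)| = |\tff(x,h) - \tff(x,0)| \leq L h |i(x)|$ is the last conclusion of Lemma \ref{lem tff}. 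Hence $\tff$ satisfies all of \eqref{eqn11z}.

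There is essentially no obstacle here; the whole of the corollary is contained in Lemma \ref{lem tff} together with the independence of $\tff$ from $x'$. The only points deserving a word of care are that the balls $B_R(x)$ appearing in Lemma \ref{lem tff} are precisely those used in \eqref{eqn11z}, so no rescaling of $R$ is needed, and that the second-slot Lipschitz condition listed in \eqref{eqn11z}, while formally present, imposes nothing in the Runge--Kutta case.
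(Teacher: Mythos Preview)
Your proposal is correct and matches the paper's intent exactly: the paper states the corollary immediately after Lemma~\ref{lem tff} with no separate proof, treating it as an obvious consequence of that lemma together with the fact that the Runge--Kutta $\tff$ is independent of $x'$. Your explicit verification of the four lines of \eqref{eqn11z}, including the vacuous second-slot Lipschitz bound, simply spells out what the paper leaves implicit.
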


If we define $\tff$ corresponding to an explicit $s$-stage Runge-Kutta method (where $a_{ij} = 0$ for $i \leq j$), then the $k_i$ in \eqref{eqn41z} are defined explicitly in terms of $x$ and $h$ and $\tff(x,h)$ may be calculated explicitly (instead of using the map $\cK$ which may not be computed explicitly).  To obtain a Runge-Kutta method that is of order $p$ there are additional constraints on the $a_{ij}$ and $b_i$ (e.g. see \cite[p. 29]{HLW}).

To check whether or not $\bii = \bii(x,x')$ satisfies \eqref{eqn11z1} we only need to ensure that it is locally Lipschitz continuous since $\bii(x,x) = i(x)$ is already satisfied by the definition of a discrete gradient.  Moreover, since $i(x)$ is locally Lipschitz, it easily follows that both the coordinate increment discrete gradient (see \cite{abeitoh}) and the one used in the average vector field method (see e.g. \cite{MQR99,quispelmclaren08}) are also locally Lipschitz.  For general $I$ there are no known explicitly defined discrete gradients.  However, if $I$ is quadratic then $i(x)$ is linear and we may define $\bii(x,x')$ so that it is linear in $x'$ by taking 
\begin{equation}
\label{eqn48z}
	\bii(x,x') := i\left( \tsfrac{x + x'}{2} \right) = \tsfrac{1}{2}(i(x) + i(x')).
\end{equation}

There is considerable freedom over how we choose $\tii$, $\hii$ and $\brii$ in \eqref{eqn4}, and to apply Theorems \ref{thm exist} and \ref{thm error2} they only need to satisfy \eqref{eqn12z} and \eqref{eqn30z1}.  For example, we may define $\tii$ to be any of the following (and similarly for $\hii$ and $\brii$):
\begin{align*}
	\tii(x) &= i(x), \\
	\tii(x') &= i(x'), \\
	\tii(x,x') &= \tsfrac{1}{2} (i(x) + i(x')), \\
	\tii(x,x') &= i\left( \tsfrac{x + x'}{2} \right), \\
	\tii(x,x') &= \bii(x,x'), \\
	\tii(x,h) &= i(y) \qquad \mbox{where $y = x + h \tff(x,y,h)$}.
\end{align*}
Except for the final case when $\tii(x,h) = i(y)$, it is obvious that all of these choices for $\tii$ satisfy \eqref{eqn12z} since $i(x)$ is locally Lipschitz.  To confirm that $\tii(x,h) = i(y)$ satisfies \eqref{eqn12z} we prove the following two lemmas.

\begin{lemma}
\label{lem exist f2}
Let $B \subset \bbR^d$ be a bounded set and suppose there exist positive constants $R$, $L$ and $H$ such that for each $x \in B$, and for all $u,v,w \in B_{R}(x)$ and $h \in [0,H)$, $\tff : \bbR^d \times \bbR^d \times \bbR_+ \rightarrow \bbR^d$ satisfies \eqref{eqn11z}.  Let $C_1$ be the constant from \eqref{eqn2}.

Then for each $x \in B$, $u \in B_{2R}(x)$ and $h < \min \lbrace H, \tsfrac{1}{2R}, \tsfrac{1}{(C_1 + 2L/R) R}, \tsfrac{1}{L}\rbrace$ there exists a unique $y \in B_{R}(x)$ satisfying
$$
	y = u + h \tff(u,y,h).
$$
Moreover, if $u = x$ then $y \in B_{2R}(x)$.  
\end{lemma}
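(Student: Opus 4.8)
The plan is to run Banach's Fixed Point Theorem (Theorem~\ref{thm banach}), in the same spirit as the proofs of Lemma~\ref{lem exist f} and Lemma~\ref{lem rk exist}. First I would dispose of the degenerate case $i(x)=0$: here $B_R(x)=B_{2R}(x)=\lbrace x\rbrace$, so necessarily $u=x$, and the consistency bound $|\tff(x,x,h)-\tff(x,x,0)|\le Lh|i(x)|=0$ together with $\tff(x,x,0)=f(x)$ and the fact (noted after \eqref{eqn2}) that $i(x)=0$ forces $f(x)=0$ gives $\tff(x,x,h)=0$; hence $y=x$ is the unique solution of $y=x+h\tff(x,y,h)$ in $\lbrace x\rbrace$, and trivially $y\in B_{2R}(x)$.

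For $i(x)\neq 0$ I would fix $x\in B$, $u\in B_{2R}(x)$ and $h$ below the stated threshold, set $X:=B_R(x)$ (a nonempty closed subset of $\bbR^d$, hence a complete metric space under $|\cdot|$), and define $T:X\to\bbR^d$ by $T(z):=u+h\tff(u,z,h)$. The key structural point is that, because a larger subscript denotes a \emph{smaller} ball, $B_{2R}(x)\subseteq B_R(x)$; thus $u$, $z$ and $x$ all lie in $B_R(x)$ and every estimate in \eqref{eqn11z} is available. To show $T(X)\subseteq X$, I would bound $|\tff(u,z,h)|$ by splitting $\tff(u,z,h)-f(x)$ into three increments — varying the first slot $u\to x$ (controlled by $L|u-x|\le L|i(x)|/(2R)$), varying the second slot $z\to x$ (controlled by $L|z-x|\le L|i(x)|/R$ since $z\in B_R(x)$), and the time increment $h\to 0$ (controlled by $Lh|i(x)|\le L|i(x)|/(2R)$ using $h<\tsfrac1{2R}$) — and use \eqref{eqn2} for $|f(x)|\le C_1|i(x)|$, obtaining $|\tff(u,z,h)|\le(C_1+\tsfrac{2L}{R})|i(x)|$. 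Then $|T(z)-x|\le|u-x|+h|\tff(u,z,h)|\le\tsfrac{|i(x)|}{2R}+h(C_1+\tsfrac{2L}{R})|i(x)|$, and the constraint $h<\tsfrac{1}{(C_1+2L/R)R}$ is arranged so that the right-hand side does not exceed $\tsfrac{|i(x)|}{R}$, i.e.\ $T(z)\in B_R(x)$.

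Next I would verify the contraction property: $|T(z)-T(z')|=h|\tff(u,z,h)-\tff(u,z',h)|\le Lh|z-z'|$, which is a strict contraction since $h<\tsfrac1L$. Banach's theorem then yields a unique fixed point $y\in B_R(x)$ with $y=u+h\tff(u,y,h)$, which is the first claim. For the ``moreover'' statement I would simply rerun the self-map estimate with $u=x$: the term $|u-x|$ now vanishes, so $|y-x|=h|\tff(x,y,h)|\le h\bigl(C_1+\tsfrac{3L}{2R}\bigr)|i(x)|$, and the same smallness of $h$ forces $|y-x|\le\tsfrac{|i(x)|}{2R}$, i.e.\ $y\in B_{2R}(x)$.

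The only real work is the constant bookkeeping: checking that each of the three thresholds in the minimum does its intended job — $\tsfrac1{2R}$ to absorb the time-consistency error into the radius budget, $\tsfrac1{(C_1+2L/R)R}$ for the self-map, and $\tsfrac1L$ for the contraction — and, crucially, that the ball $B_R(x)$ is preserved by $T$ even though the base point $u$ is only assumed to lie in the enlarged neighbourhood $B_{2R}(x)$; the latter is where the inverse-radius convention $B_{2R}(x)\subseteq B_R(x)$ is indispensable. I do not expect any conceptual obstacle beyond this.
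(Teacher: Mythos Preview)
Your argument for the main assertion is essentially identical to the paper's: same complete metric space $X=B_R(x)$, same map $T(z)=u+h\tff(u,z,h)$, same three-increment splitting of $\tff(u,z,h)-f(x)$ to get the self-map, and the same Lipschitz-in-the-second-slot estimate for the contraction. Your explicit handling of the degenerate case $i(x)=0$ and your remark that the inverse-radius convention gives $B_{2R}(x)\subseteq B_R(x)$ (so that \eqref{eqn11z} applies with $u$ in the first slot) are both good clarifications that the paper leaves implicit.

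The one place your proposal diverges from the paper, and where it does not quite close, is the ``moreover'' clause. You bound the already-found fixed point directly via $|y-x|=h|\tff(x,y,h)|\le h(C_1+\tfrac{3L}{2R})|i(x)|$, using $|y-x|\le |i(x)|/R$ from the first part; but under the stated threshold $h<1/((C_1+2L/R)R)$ this only yields $|y-x|<|i(x)|/R$ again, not $|i(x)|/(2R)$. The paper instead \emph{reruns} the Banach argument on the smaller ball $X=B_{2R}(x)$: for $z\in B_{2R}(x)$ the second-slot increment is now controlled by $L|z-x|\le L|i(x)|/(2R)$ rather than $L|i(x)|/R$, and the $|u-x|$ term vanishes, so the self-map estimate lands back in $B_{2R}(x)$. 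Uniqueness in the larger ball $B_R(x)$ then forces this new fixed point to coincide with the $y$ already found. That rerun is the missing step.
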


\begin{proof}
Fix $x \in B$, $u \in B_{2R}(x)$ and $h < \min \lbrace H, \tsfrac{1}{2R}, \tsfrac{1}{(C_1 + 2L/R) R}, \tsfrac{1}{L}\rbrace$.  Define $X := B_{R}(x)$ and $T: X \rightarrow \bbR^d$ by $T(z):= u + h \tff(u,z,h)$ for each $z \in X$.  Let $z,z' \in X$.  Using \eqref{eqn11z}, \eqref{eqn2}, $u \in B_{2R}(x)$, $z \in B_{R}(x)$ and $h \leq \lbrace \tsfrac{1}{2R}, \tsfrac{1}{(C_1 + 2L/R) R} \rbrace$ we get
\begin{align*}
	|T(z) - x| &\leq |u-x| + h |\tff(u,z,h)| \\
	&\leq |u-x| + h ( |f(x)| + L |u-x| + L |z-x| + L h |i(x)| ) \\
	&\leq \tsfrac{|i(x)|}{2R} + h \left( C_1 + \tsfrac{L}{2R} + \tsfrac{L}{R} + \tsfrac{L}{2R} \right) |i(x)| 
	\leq \tsfrac{|i(x)|}{R},
\end{align*}
so $T(z) \in X$.  From \eqref{eqn11z} we also get $
	|T(z) - T(z')| = h |\tff(u,z,h) - \tff(u,z',h)| \leq h L |z-z'|$.  Since $h < \tsfrac{1}{L}$, $T: X \rightarrow X$ is a contraction and the first part of the result then follows by applying Theorem \ref{thm banach}.

If $u=x$ then repeating this argument with $X := B_{2R}(x)$ yields $y \in B_{2R}(x)$.
\end{proof}

\begin{lemma}
\label{lem tii}
Let $B \subset \bbR^d$ be a bounded set and let $R_0$ be a positive constant.  Let $L_0$ be the constant from \eqref{eqn40z}, and let $C_1$ be the constant from \eqref{eqn2}.  Suppose there exist positive constants $R$, $L$ and $H$ such that for each $x \in B$, and all $u,v,w \in B_{R}(x)$ and $h \in [0,H)$ that $\tff : \bbR^d \times \bbR^d \times \bbR_+ \rightarrow \bbR^d$ satisfies \eqref{eqn11z}.  Define $R_1 := 2R$, $L_1 := \max \lbrace L, L_0, L_0 (C_1 + \tsfrac{L}{R} ) \rbrace$,  
$
	H_1 := \min \lbrace H, \tsfrac{1}{2R}, \tsfrac{1}{(C_1 + 2L/R) R}, \tsfrac{1}{2 L}\rbrace,
$
and $\tii : \bbR^d \times \bbR_+ \rightarrow \bbR^d$ such that 
$$
	\tii(u,h) := i(y) \qquad \mbox{where $y$ satisfies $y = u + h \tff(u,y,h)$}
$$
for all $x \in B$, $u \in B_{R_1}(x)$ and $h \in [0,H_1)$.

Then for each $x \in B$, and for all $u,v \in B_{R_1}(x)$ and $h \in [0,H_1)$, $\tii$ satisfies
\begin{align*}
	\tii(x,0) &= i(x), \\
	|\tii(u,h) - \tii(v,h)| &\leq L_1 |u-v|, \\
	|\tii(x,h) - \tii(x,0)| &\leq L_1 h |i(x)|.
\end{align*}
\end{lemma}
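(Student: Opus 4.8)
The plan is to derive all three assertions directly from Lemma \ref{lem exist f2} together with the local Lipschitz property \eqref{eqn40z} of $i$; no fresh contraction-mapping argument is needed, since the only implicit object here, the point $y$ solving $y = u + h\tff(u,y,h)$, has already been produced and localised in Lemma \ref{lem exist f2}. First I would check well-definedness: with $R_1 = 2R$ and the stated $H_1$ (which satisfies the hypothesis $h < \min\lbrace H, \tsfrac{1}{2R}, \tsfrac{1}{(C_1+2L/R)R}, \tsfrac{1}{L}\rbrace$ of Lemma \ref{lem exist f2}, as $H_1 \le \tsfrac{1}{2L} \le \tsfrac{1}{L}$), that lemma gives, for each $x \in B$, $u \in B_{R_1}(x)$ and $h \in [0,H_1)$, a unique $y \in B_R(x)$ with $y = u + h\tff(u,y,h)$, so $\tii(u,h) := i(y)$ is unambiguous; moreover $y \in B_{2R}(x)$ when $u = x$. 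All these points lie in $B_R(x)$ (contained in $B_{R_0}(x)$ in the regime $R \ge R_0$ in which the lemma is applied), so \eqref{eqn40z} supplies the $L_0$-Lipschitz bound for $i$ on the relevant set. Taking $h = 0$ forces $y = u$, hence $\tii(x,0) = i(x)$.

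For the Lipschitz dependence on the first argument, fix $x \in B$ and let $y_u, y_v \in B_R(x)$ be the solutions corresponding to $u, v \in B_{R_1}(x) \subseteq B_R(x)$. Subtracting the two fixed-point identities and splitting $\tff(u,y_u,h) - \tff(v,y_v,h) = [\tff(u,y_u,h) - \tff(v,y_u,h)] + [\tff(v,y_u,h) - \tff(v,y_v,h)]$, the two Lipschitz inequalities in \eqref{eqn11z} give $|y_u - y_v| \le |u - v| + hL|u-v| + hL|y_u - y_v|$; since $h < H_1 \le \tsfrac{1}{2L}$ the factor $hL$ is $<\tsfrac12$, so $|y_u - y_v| \le \tsfrac{1+hL}{1-hL}|u-v|$, a fixed constant multiple of $|u-v|$. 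Feeding this into \eqref{eqn40z} yields $|\tii(u,h) - \tii(v,h)| = |i(y_u) - i(y_v)| \le L_0|y_u - y_v| \le L_1|u-v|$, the numerical factor being absorbed into $L_1 = \max\lbrace L, L_0, L_0(C_1 + \tsfrac{L}{R})\rbrace$.

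For the time-step estimate, take $u = x$, so that Lemma \ref{lem exist f2} places $y$ in $B_{2R}(x)$, i.e.\ $|y - x| \le \tsfrac{|i(x)|}{2R}$. Then $|\tii(x,h) - \tii(x,0)| = |i(y) - i(x)| \le L_0|y - x| = L_0 h |\tff(x,y,h)|$, and \eqref{eqn11z} together with \eqref{eqn2} bounds $|\tff(x,y,h)| \le |f(x)| + L|y-x| + Lh|i(x)| \le (C_1 + \tsfrac{L}{2R} + Lh)|i(x)|$; using $h < H_1 \le \tsfrac{1}{2R}$ this is $\le (C_1 + \tsfrac{L}{R})|i(x)|$, whence $|\tii(x,h) - \tii(x,0)| \le L_0(C_1 + \tsfrac{L}{R}) h |i(x)| \le L_1 h |i(x)|$ by the definition of $L_1$.

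There is no genuine difficulty in this argument: the existence and localisation of $y$ is inherited from Lemma \ref{lem exist f2}, and everything else is Lipschitz bookkeeping. The one thing to handle with care is ensuring the auxiliary point $y$ always lies in the ball on which \eqref{eqn40z} delivers the constant $L_0$, and matching each of the four terms in the definition of $H_1$ to the estimate that consumes it — the $\tsfrac{1}{2L}$ term for the contraction bound on $|y_u - y_v|$, the $\tsfrac{1}{2R}$ term for the time-step estimate, and the remaining terms to invoke Lemma \ref{lem exist f2}.
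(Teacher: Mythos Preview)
Your approach is the same as the paper's, and your consistency and time-step estimates are correct. The gap is in the Lipschitz estimate. From $y_u - y_v = (u-v) + h[\tff(u,y_u,h) - \tff(v,y_v,h)]$ you correctly obtain $|y_u - y_v| \le \tsfrac{1+hL}{1-hL}\,|u-v|$, and with $hL < \tsfrac12$ this factor can be arbitrarily close to $3$. Multiplying by $L_0$ therefore gives only $|\tii(u,h)-\tii(v,h)| \le 3L_0|u-v|$, yet you then assert this is $\le L_1|u-v|$ because the factor is ``absorbed into $L_1 = \max\lbrace L, L_0, L_0(C_1 + \tsfrac{L}{R})\rbrace$''. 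That absorption fails in general: if $L \le L_0$ and $C_1 + \tsfrac{L}{R} \le 1$ then $L_1 = L_0$, and $3L_0 \not\le L_0$. So the inequality with the \emph{specific} $L_1$ in the lemma statement is not established by your argument.

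The paper's proof instead asserts the sharper bound $|y(u,h) - y(v,h)| \le |u-v|$, so that $|\tii(u,h)-\tii(v,h)| \le L_0|u-v| \le L_1|u-v|$ with no extraneous factor. (Its printed derivation in fact drops the additive $|u-v|$ term when subtracting the two fixed-point identities, writing $|y(u,h)-y(v,h)| = h|\tff(u,y(u,h),h)-\tff(v,y(v,h),h)|$, and so reaches that bound by a slip; getting the lemma with the stated $L_1$ genuinely requires either this sharper estimate on $|y_u-y_v|$ or an enlarged Lipschitz constant such as $3L_0$ in place of $L_0$.)
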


\begin{proof}
Fix $x \in B$, $u,v \in B_{R_1}(x)$ and $h \in [0,H_1)$.  Let $y = y(x,h)$ denote the solution to $y = x + h \tff(x,y,h)$, and similarly for $y(u,h)$ and $y(v,h)$.  From Lemma \ref{lem exist f2} we know these solutions exist and that $y(x,h) \in B_{2R}(x)$ and $y(u,h), y(v,h) \in B_{R}(x)$.

If $h = 0$ then $y = x$ and $\tii(x,0) = i(x)$.  

If $h \neq 0$, since $y(u,h),y(v,h) \in B_{R}(x)$, we may use \eqref{eqn11z} and $h \leq \tsfrac{1}{2L}$ to get
\begin{align*}
	|y(u,h) - y(v,h)| &= h |\tff(u,y(u,h),h) - \tff(v,y(v,h),h)| \\
	&\leq h L |u-v| + h L |y(u,h) - y(v,h)| \\
	&\leq \tsfrac{1}{2} |u-v| + \tsfrac{1}{2} |y(u,h) - y(v,h)|,
\end{align*}
and so $|y(u,h) - y(v,h)| \leq |u-v|$.  Using this and \eqref{eqn40z} we get
$$
	|\tii(u,h) - \tii(v,h)| 
	\!=\! |i(y(u,h)) - i(y(v,h))| 
	\! \leq \!  L_0 | y(u,h) - y(v,h) | 
	\! \leq \! L_0 | u-v| \! \leq \!  L_1 |u-v|.
$$
Using \eqref{eqn40z}, \eqref{eqn11z}, \eqref{eqn2}, $y(x,h) \in B_{2R}(x)$ and $h \leq \tsfrac{1}{2 R}$ we also get
\begin{align*}
	|\tii(x,h) - \tii(x,0)|
	&= |i(y(x,h)) - i(x)| \\
	&\leq L_0 |y(x,h) - x| 
	= L_0 h | \tff(x,y(x,h),h) | \\
	&\leq L_0 h (|f(x)| + L |y(x,h) - x| + L h |i(x)| ) \\
	&\leq L_0 h \left( C_1 + \tsfrac{L}{2R} + \tsfrac{L}{2R} \right) |i(x)| \\
	&\leq L_1 h |i(x)|.
\end{align*}
\end{proof}

In Theorem \ref{thm error2} we also require that $\tii$, $\bii$, $\hii$ and $\brii$ satisfy condition \eqref{eqn30z1}.  By taking $\hii(x,x',h) := \tii(x,x',h)$ and $\brii(x,x',h) := \bii(x,x')$ then this is achieved trivially, and the resulting method is equivalent to a projection method (see \cite{NMQSZ}).  Unfortunately, in this case the system of equations to solve at each time step is nonlinear in general.

A method that is almost a projection method is the following.  For general $\tii$, $\bii$ and $\tff$ satisfying the conditions in Theorem \ref{thm exist} and Theorem \ref{thm error2}, define $\hii(x,x',h) := \tii(x,x',h)$ and $\brii(x,h) := \bii(x,y(x,h))$ where $y=y(x,h)$ satisfies $y = x + h \tff(x,y,h)$.  The following two lemmas ensure that $\brii$ satisfies \eqref{eqn12z} and $\tii$, $\bii$, $\hii$ and $\brii$ satisfy \eqref{eqn30z1}.

\begin{lemma}
\label{lem brii}
Let $B \subset \bbR^d$ be a bounded set and let $C_1$ be the constant from \eqref{eqn2}.  Suppose there exist positive constants $R$, $L$ and $H$ such that for each $x \in B$, and all $u,v,w \in B_{R}(x)$ and $h \in [0,H)$ that $\tff : \bbR^d \times \bbR^d \times \bbR_+ \rightarrow \bbR^d$ satisfies \eqref{eqn11z}, and $\bii : \bbR^d \times \bbR^d \rightarrow \bbR^d$ is a discrete gradient of $I$ satisfying \eqref{eqn11z1}.  Define $R_1 := 2R$, $L_1 := \max \lbrace 2L, L (C_1 + \tsfrac{L}{R}) \rbrace$ and $H_1 := \min \lbrace H, \tsfrac{1}{2R}, \tsfrac{1}{L(C_1 + L/R)}, \tsfrac{1}{2L} \rbrace$, and also define $\brii : \bbR^d \times \bbR_+ \rightarrow \bbR^d$ such that
$$
	\brii(u,h) := \bii(u,y) \qquad \mbox{where $y$ satisfies $y = u + h \tff(u,y,h)$},
$$
for all $x \in B$, $u \in B_{R_1}(x)$ and $h \in [0,H_1)$.

Then for each $x \in B$, and for all $u,v \in B_{R_1}(x)$ and $h \in [0,H_1)$, $\brii$ satisfies
\begin{align*}
	\brii(x,0) &= i(x), \\
	|\brii(u,h) - \brii(v,h)| &\leq L_1 |u-v|, \\
	|\brii(x,h) - \brii(x,0)| &\leq L_1 h |i(x)|.
\end{align*}
\end{lemma}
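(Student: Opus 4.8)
The statement is the $\brii$-analogue of Lemma \ref{lem tii}, so the plan is to reuse that proof's structure: $\brii(u,h)$ is simply the discrete gradient $\bii$ evaluated at the pair $(u,y(u,h))$, where $y=y(u,h)$ is the implicitly defined point satisfying $y=u+h\tff(u,y,h)$. The one genuine difference from Lemma \ref{lem tii} is that $\bii$ has two slots, so both Lipschitz estimates in \eqref{eqn11z1} get used (instead of the single Lipschitz bound on $i$ in \eqref{eqn40z}), which is why all the constants here are built from $L$ rather than from $L_0$.

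First I would fix $x\in B$, $u,v\in B_{R_1}(x)=B_{2R}(x)\subset B_R(x)$ and $h\in[0,H_1)$, and apply Lemma \ref{lem exist f2} to obtain that $y(u,h)$ and $y(v,h)$ exist, are unique and lie in $B_R(x)$, and that $y(x,h)\in B_{2R}(x)$; here one only needs to check that $H_1$ is no larger than the time-step bound that lemma requires (or, equivalently, re-run its one-line contraction estimate). The value $\brii(x,0)=i(x)$ is then immediate, since the fixed-point equation at $h=0$ forces $y(x,0)=x$, so $\brii(x,0)=\bii(x,x)=i(x)$ by the first line of \eqref{eqn11z1}. If $i(x)=0$ then $B_R(x)=\{x\}$, $y(x,h)=x$, and all three claims hold trivially, so from now on assume $i(x)\ne0$.

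For the Lipschitz bound I would first reproduce the estimate used in Lemma \ref{lem tii}: writing $y(u,h)-y(v,h)=h\bigl(\tff(u,y(u,h),h)-\tff(v,y(v,h),h)\bigr)$, the two inequalities in \eqref{eqn11z} together with $h\le\tsfrac1{2L}$ give $|y(u,h)-y(v,h)|\le|u-v|$. Then split
\[
	|\brii(u,h)-\brii(v,h)|\le|\bii(u,y(u,h))-\bii(v,y(u,h))|+|\bii(v,y(u,h))-\bii(v,y(v,h))|,
\]
bound the first term by $L|u-v|$ and the second by $L|y(u,h)-y(v,h)|\le L|u-v|$ using \eqref{eqn11z1}, for a total of $2L|u-v|\le L_1|u-v|$. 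For the consistency-in-$h$ bound, write $|\brii(x,h)-\brii(x,0)|=|\bii(x,y(x,h))-\bii(x,x)|\le L|y(x,h)-x|=Lh|\tff(x,y(x,h),h)|$, and then estimate $|\tff(x,y(x,h),h)|\le|f(x)|+L|y(x,h)-x|+Lh|i(x)|\le(C_1+\tsfrac{L}{R})|i(x)|$ via \eqref{eqn11z}, \eqref{eqn2}, the inclusion $y(x,h)\in B_{2R}(x)$ and $h\le\tsfrac1{2R}$; this gives $|\brii(x,h)-\brii(x,0)|\le L(C_1+\tsfrac{L}{R})h|i(x)|\le L_1h|i(x)|$.

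The only real obstacle is the bookkeeping of the constants: one must check in parallel that $H_1$ is small enough to invoke Lemma \ref{lem exist f2}, that $h\le\tsfrac1{2L}$ is what closes the contraction-type estimate for $|y(u,h)-y(v,h)|$, and that $L_1=\max\{2L,\,L(C_1+L/R)\}$ dominates every constant that shows up in the Lipschitz and consistency estimates. No new idea beyond the proof of Lemma \ref{lem tii} is needed.
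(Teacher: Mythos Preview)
Your proposal is correct and follows exactly the approach the paper intends: the paper omits the proof entirely, stating that it ``is very similar to the proof of Lemma \ref{lem tii},'' and your argument is precisely that adaptation, with the only substantive change being the two-slot split of $\bii$ (yielding the factor $2L$) in place of the single Lipschitz bound on $i$ used there. Your remark that the constant bookkeeping (in particular checking $H_1$ against the hypotheses of Lemma \ref{lem exist f2}, or redoing its contraction estimate directly) is the only thing to verify is accurate.
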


The proof of this result is very similar to the proof of Lemma \ref{lem tii} so we omit it.

\begin{lemma}
\label{lem brii2}
Let $B \subset \bbR^d$ be a compact set and suppose that $R,L,H,\tff,\tii$ and $\bii$ all satisfy the conditions of Theorem \ref{thm exist}.  Define $\hii \equiv \tii$.  Define $R_1$, $L_1$, $H_1$ and $\brii$ as in Lemma \ref{lem brii}, and define
$$
	R_1' := \max \{ R_1, 10 L_1 \} \qquad \mbox{and} \qquad 
	H_1' := \min \lbrace H_1, \tsfrac{1}{10 L_1} , \tsfrac{1}{6C_2 R_1'},\tsfrac{1}{(36C_2 + 6) L_1}\rbrace.
$$

Then $\tff$, $\tii$, $\bii$, $\hii$, $\brii$ $R_1'$ and $H_1'$ satisfy the conditions in Theorem \ref{thm exist} with $R$, $L$, $H$, $R'$ and $H'$ replaced by $R_1$, $L_1$, $H_1$, $R_1'$ and $H_1'$ respectively.  

Moreover, for each $x \in B$ and $h \in[0, H_1')$ 
\begin{enumerate}
\item let $y \in B_{6R_1'}(x)$ be the unique solution to \eqref{eqn21z} (that exists by Lemma \ref{lem exist f}), 
\item let $x' \in B_{R_1'}(x)$ be the unique solution to \eqref{disc1} with $\wS$ defined by \eqref{eqn4} (that exists by Theorem \ref{thm exist}), and
\item let $x(\cdot)$ denote the exact solution to \eqref{p1} satisfying $x(t)$ for some $t \in \bbR_+$.
\end{enumerate}
Also suppose that 
\begin{enumerate}
\setcounter{enumi}{3}
\item $\tff$ is such that the method defined by \eqref{eqn21z} is of  order $p$ for some $p \in \mathbb{N}$, i.e. there exist constants $C_3$ and $H_3 < H_1'$ such that 
$$
	|y-x(t+h)| \leq C_3 h^{p+1}, \quad \mbox{for all $h \in [0,H_3]$ and all $x \in B$}.
$$
\end{enumerate}
If we define $C_4 := \tsfrac{6L_1}{5} \max \lbrace 1, C_3 \rbrace$, then $\tii$, $\bii$, $\hii$ and $\brii$ satisfy \eqref{eqn30z1}.
\end{lemma}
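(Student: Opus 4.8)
The plan is to treat the two assertions in turn. The first is a short monotonicity remark combined with a direct citation of Lemma~\ref{lem brii}. The second, after exploiting $\hii\equiv\tii$, reduces to bounding $|y-x'|$, which I will estimate by passing through the exact solution $x(t+h)$ rather than through the \emph{a~priori} bound \eqref{eqn38z} from the proof of Theorem~\ref{thm error2} --- the latter route would be circular, since \eqref{eqn38z} is itself derived \emph{under} hypothesis \eqref{eqn30z1}.

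\emph{Conditions of Theorem~\ref{thm exist} with the primed constants.} Since $R_1=2R\ge R$, $L_1\ge L$ and $H_1\le H$, the ball $B_{R_1}(x)$ is contained in $B_R(x)$, the right-hand sides of all the Lipschitz inequalities only increase, and the range of admissible $h$ only shrinks; hence $\tff$, $\tii$, $\bii$ (and therefore $\hii\equiv\tii$) automatically satisfy \eqref{eqn11z}, \eqref{eqn12z}, \eqref{eqn11z1} with $R$, $L$, $H$ replaced by $R_1$, $L_1$, $H_1$. For $\brii(x,x',h)=\bii(x,y)$, which has no genuine dependence on its middle slot, the ``second-slot'' Lipschitz line in \eqref{eqn12z} is trivial, while the remaining three lines of \eqref{eqn12z} are exactly the three conclusions of Lemma~\ref{lem brii}. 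Finally $R_1'$ and $H_1'$ as defined are precisely the quantities $R'$, $H'$ of Theorem~\ref{thm exist} formed from $R_1$, $L_1$, $H_1$. This gives the first assertion; in particular Theorem~\ref{thm exist} now applies with the primed constants, so the $x'$ of item~(2) is well defined.

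\emph{Verification of \eqref{eqn30z1}, and the main obstacle.} Fix $x\in B$ and $h\in[0,H_3]\subset[0,H_1')$. Because $\hii\equiv\tii$, the left-hand side of \eqref{eqn30z1} collapses to
$$
	\bigl|\tii(x,x',h)\cdot\bigl(\brii(x,x',h)-\bii(x,x')\bigr)\bigr|\le|\tii(x,x',h)|\,|\brii(x,x',h)-\bii(x,x')|.
$$
For the first factor, the estimate \eqref{eqn12z1}, now with the primed constants (legitimate since $x'\in B_{R_1'}(x)$, $R_1'\ge 10L_1$ and $h\le\tfrac{1}{10L_1}$), gives $|\tii(x,x',h)|\le\tfrac{6}{5}|i(x)|$. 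For the second factor, $\brii(x,x',h)=\bii(x,y)$ with $y$ the solution of \eqref{eqn21z} (the same $y$ as in item~(1), by uniqueness), and since $R_1'\ge R_1$ forces $B_{6R_1'}(x)\subseteq B_{R_1}(x)$, both $y\in B_{6R_1'}(x)$ and $x'\in B_{R_1'}(x)$ lie in $B_{R_1}(x)$; the second-slot Lipschitz bound in \eqref{eqn11z1} then yields $|\bii(x,y)-\bii(x,x')|\le L_1|y-x'|$. It remains to bound $|y-x'|$, and here is the one place where care is needed: instead of \eqref{eqn38z} I route through the exact solution,
$$
	|y-x'|\le|y-x(t+h)|+|x(t+h)-x'|\le C_3h^{p+1}+|x'-x(t+h)|\le\max\{1,C_3\}\bigl(h^{p+1}+|x'-x(t+h)|\bigr),
$$
using the order-$p$ hypothesis $|y-x(t+h)|\le C_3h^{p+1}$. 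Multiplying the three bounds together gives
$$
	|\hii(x,x',h)\cdot\brii(x,x',h)-\tii(x,x',h)\cdot\bii(x,x')|\le\tfrac{6L_1}{5}\max\{1,C_3\}\bigl(h^{p+1}+|x'-x(t+h)|\bigr)|i(x)|,
$$
which is \eqref{eqn30z1} with $C_4=\tfrac{6L_1}{5}\max\{1,C_3\}$. The arithmetic is light; the only genuine pitfall is the circularity flagged above, and beyond that the work is purely bookkeeping --- checking that each Lipschitz and consistency estimate is applied at points lying inside the ball on which it is known to hold, which in every case reduces to the inclusion $B_{6R_1'}(x)\subseteq B_{R_1}(x)$ guaranteed by $R_1'\ge R_1$.
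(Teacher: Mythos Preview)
Your proof is correct and follows essentially the same approach as the paper: the first assertion via the monotonicity $R_1\ge R$, $L_1\ge L$, $H_1\le H$ (plus Lemma~\ref{lem brii} for $\brii$), and the second by collapsing the left-hand side via $\hii\equiv\tii$, applying \eqref{eqn12z1} and \eqref{eqn11z1}, and routing $|y-x'|$ through $x(t+h)$ using the order-$p$ hypothesis. Your explicit attention to the ball inclusions and to the circularity pitfall with \eqref{eqn38z} is welcome elaboration but does not deviate from the paper's argument.
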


\begin{proof}
The fact that $\tff$, $\tii$, $\bii$, $\hii$, $\brii$ $R_1'$ and $H_1'$ satisfy the conditions in Theorem \ref{thm exist} with $R$, $L$, $H$, $R'$ and $H'$ replaced by $R_1$, $L_1$, $H_1$, $R_1'$ and $H_1'$ respectively, follows from $R_1 \geq R$, $L_1 \geq L$ and $H_1 \leq H$.

Using \eqref{eqn12z1} and \eqref{eqn11z1} (with $L$ replaced by $L_1$) we get
\begin{align*}
	|\hii(x,x',h) \!\cdot\! \brii(x,x',h) \!-\! \tii(x,x',h)\!\cdot\! \bii(x,x')|
	&= |\tii(x,x',h) \cdot [\bii(x,y) - \bii(x,x') ]| \\
	&\leq \tsfrac{6}{5} |i(x)| L_1 |x'-y| \\
	&\leq \tsfrac{6L_1}{5} |i(x)| (|x' \!-\! x(t+h)| + |y \!-\! x(t+h)|) \\
	&\leq \tsfrac{6L_1}{5} \left(  |x'-x(t+h)| + C_3 h^{p+1} \right) |i(x)| \\
	&\leq C_4 \left(  |x'-x(t+h)| + h^{p+1} \right) |i(x)|.
\end{align*}
\end{proof}

This lemma leads to the following obvious corollary of Theorem \ref{thm error2}.

\begin{corollary}
With the same hypotheses as Lemma \ref{lem brii2}, then the discrete gradient method defined by \eqref{disc1} and \eqref{eqn4} is of order $p$.
\end{corollary}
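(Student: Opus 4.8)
The plan is simply to check, item by item, that the hypotheses of Lemma~\ref{lem brii2} supply all the hypotheses of Theorem~\ref{thm error2} (after the cosmetic relabelling $R\to R_1$, $L\to L_1$, $H\to H_1$, $R'\to R_1'$, $H'\to H_1'$), and then to quote Theorem~\ref{thm error2}. There is no genuine difficulty here: the entire content is constant-bookkeeping, which is why the paper calls it an ``obvious corollary''.

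First I would note that Lemma~\ref{lem brii2} already asserts that $\tff$, $\tii$, $\bii$, $\hii$ (set equal to $\tii$), $\brii$, $R_1'$ and $H_1'$ satisfy the conditions of Theorem~\ref{thm exist} with $R,L,H,R',H'$ replaced by $R_1,L_1,H_1,R_1',H_1'$; that is, \eqref{eqn11z}, \eqref{eqn11z1} and \eqref{eqn12z} hold for all the relevant functions. This is exactly the opening clause of Theorem~\ref{thm error2}. Consequently the three objects named in items (1)--(3) of Theorem~\ref{thm error2} are available: $x'\in B_{R_1'}(x)$ exists and is locally unique by Theorem~\ref{thm exist}, $y\in B_{6R_1'}(x)$ exists and is unique by Lemma~\ref{lem exist f}, and the exact solution $x(\cdot)$ exists on $[t,t+h]$ by Lemma~\ref{lem 1}. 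For the last point one also needs the extra Lipschitz bound $|f(u)-f(v)|\le L_1|u-v|$ on $B_{5R_1'}(x)$; since $f$ is locally Lipschitz and $B$ is compact this holds once $L_1$ is taken large enough, and enlarging $L_1$ only weakens \eqref{eqn11z}--\eqref{eqn12z} while shrinking the balls $B_{5R_1'}(x)$, so it is harmless --- this is precisely the remark made just after Lemma~\ref{lem 1}.

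Next I would match the two numbered hypotheses (4) and (5) of Theorem~\ref{thm error2}. Hypothesis (4), that $\tff$ makes the method \eqref{eqn21z} of order $p$ with constants $C_3$ and $H_3<H_1'$, is verbatim hypothesis (4) of Lemma~\ref{lem brii2} (and since $H_3$ may always be decreased, the requirement $H_3<H_1'$ is automatic). Hypothesis (5), the bound \eqref{eqn30z1} on $|\hii\cdot\brii-\tii\cdot\bii|$, is exactly the conclusion of Lemma~\ref{lem brii2}, with $C_4:=\tsfrac{6L_1}{5}\max\{1,C_3\}$.

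With all five hypotheses of Theorem~\ref{thm error2} in force, that theorem applies and yields positive constants $C_5$ and $H_5$ --- explicitly one may take $H_5=\min\{H_3,\tsfrac{1}{30C_2C_4}\}$ and $C_5=\tsfrac{5C_4}{24L_1}+\tsfrac{51C_3}{4}$, copying the proof of Theorem~\ref{thm error2} with $L$ replaced by $L_1$ --- such that $|x'-x(t+h)|\le C_5 h^{p+1}$ for all $h\in[0,H_5]$ and all $x\in B$. By the definition of order of accuracy this is exactly the statement that the discrete gradient method \eqref{disc1}--\eqref{eqn4} is of order $p$, which is the claim. The nearest thing to an obstacle is merely making sure the chain of constant reductions is mutually consistent (that $H_3<H_1'$ can be arranged, and that $L_1$ can be chosen simultaneously valid in \eqref{eqn11z}--\eqref{eqn12z} and as a Lipschitz constant for $f$ on the relevant balls); both are settled by the monotonicity of $R'=\max\{R,10L\}$ and $H'=\min\{\dots\}$ in $L$ together with the freedom to shrink $H_3$.
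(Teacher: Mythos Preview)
Your proposal is correct and is exactly the approach the paper intends: the paper gives no proof at all, merely calling it an ``obvious corollary of Theorem~\ref{thm error2}'', and your item-by-item verification that Lemma~\ref{lem brii2} supplies hypotheses (1)--(5) of Theorem~\ref{thm error2} is precisely what makes it obvious.
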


If $I$ is quadratic then using \eqref{eqn48z} to define $\bii$ and an explicit $s$-stage Runge-Kutta method to define $\tff$ we can construct a linearly implicit discrete gradient method.  The following corollary is a direct consequence of \eqref{eqn48z}, Lemmas \ref{lem tff}, \ref{lem exist f2}, \ref{lem brii} and \ref{lem brii2} and Theorems \ref{thm exist} and \ref{thm error2}.

\begin{corollary}
\label{cor 1}
Suppose $I$ is quadratic and let $\tff$ correspond to an explicit $s$-stage Runge-Kutta method of order $p$, for some $p \in \mathbb{N}$.  Then the discrete gradient method defined by 
$$
	x' = \begin{cases}
		x + \tsfrac{h}{2} \wS(x,h) i(x) + \tsfrac{h}{2} \wS(x,h) i(x') & \mbox{if $i(x) \neq 0$}, \\
		x & \mbox{if $i(x) = 0$},
		\end{cases}
$$
where
$$
	\wS(x,h) := \frac{ \tff(x,h) i(x)^T - i(x) \tff(x,h)^T}{i(x) \cdot i(x+ \frac{h}{2} \tff(x,h))}
$$
is linearly implicit in $x'$, locally well-defined (in the sense that for sufficiently small $h$ there exists a locally unique $x'$ at each time step) and of order $p$.
\end{corollary}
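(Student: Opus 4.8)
The plan is to recognise the method in the statement as the particular instance of the construction of Section~\ref{sec modified} covered by Lemma~\ref{lem brii2} and its corollary, and then to read off local well-posedness and order $p$ from Theorems~\ref{thm exist} and~\ref{thm error2}, with linear implicitness checked by hand. Concretely, I would take
$$
	\tii(x,h)=\hii(x,h):=i(x),\qquad \brii(x,h):=\bii\bigl(x,\,x+h\tff(x,h)\bigr),
$$
with $\bii$ the discrete gradient \eqref{eqn48z} and $\tff=\tff(x,h)$ the increment \eqref{eqn45z} of the given explicit $s$-stage Runge--Kutta method, and verify that inserting these into \eqref{eqn4} reproduces exactly the displayed formula for $\wS(x,h)$.

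First I would dispatch the Runge--Kutta side. Since the method has order $p\geq 1$ it is consistent, so $\sum_{i=1}^s b_i=1$, and since it is explicit its increment $\tff$ depends only on $(x,h)$; hence Lemma~\ref{lem tff} provides constants $R,L,H$ for which $\tff$ satisfies \eqref{eqn11z}, the two conditions involving the middle slot being vacuous. Explicitness also makes the auxiliary relation \eqref{eqn21z} reduce to $y=x+h\tff(x,h)$ (no fixed point is needed, although Lemma~\ref{lem exist f2} applies trivially), so that one step of \eqref{eqn21z} is literally one step of the given Runge--Kutta method; by hypothesis it has order $p$, which is precisely \eqref{eqn30z} on the compact set $B$.

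Next comes the quadratic structure. Writing $i(x)=Ax+b$ with $A$ symmetric, a one-line computation gives $I(x')-I(x)=\tsfrac12(i(x)+i(x'))\cdot(x'-x)$ and $\tsfrac12(i(x)+i(x))=i(x)$, so $\bii$ from \eqref{eqn48z} is a discrete gradient of $I$, and affinity of $i$ gives \eqref{eqn11z1}. The choices $\tii=\hii=i(\cdot)$ satisfy \eqref{eqn12z} trivially (the first entry in the list before Lemma~\ref{lem exist f2}); Lemma~\ref{lem brii} then gives \eqref{eqn12z} for $\brii(x,h)=\bii(x,y)$ with $y=x+h\tff(x,h)$; and affinity of $i$ collapses this to $\brii(x,h)=\tsfrac12\bigl(i(x)+i(x+h\tff(x,h))\bigr)=i\bigl(x+\tsfrac h2\tff(x,h)\bigr)$, so that $\hii\cdot\brii=i(x)\cdot i(x+\tsfrac h2\tff(x,h))$ — exactly the denominator in the statement. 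With \eqref{eqn11z}, \eqref{eqn11z1} and \eqref{eqn12z} in hand, Lemma~\ref{lem brii2} applies (after passing to its enlarged constants $R_1',H_1'$): it verifies the hypotheses of Theorem~\ref{thm exist}, so for every $x\in B$ and all sufficiently small $h$ there is a unique $x'\in B_{R_1'}(x)$ solving \eqref{disc1}--\eqref{eqn4}; and with $C_4:=\tsfrac{6L_1}{5}\max\{1,C_3\}$ it supplies \eqref{eqn30z1}, so the corollary to Lemma~\ref{lem brii2} (Theorem~\ref{thm error2}) yields $|x'-x(t+h)|\leq C_5 h^{p+1}$, i.e.\ order $p$.

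Finally, linear implicitness. In $x'=x+\tsfrac h2\wS(x,h)i(x)+\tsfrac h2\wS(x,h)i(x')$ the matrix $\wS(x,h)$, the point $x$ and the vector $i(x)$ do not involve $x'$, while $i(x')=Ax'+b$ is affine in $x'$; hence the update is the linear system $\bigl(\idop-\tsfrac h2\wS(x,h)A\bigr)x'=x+\tsfrac h2\wS(x,h)(i(x)+b)$, whose coefficient matrix is invertible for small $h$ and whose solution is forced to be the unique $x'\in B_{R_1'}(x)$ from Theorem~\ref{thm exist}. I expect the only real friction to be the bookkeeping in the previous paragraph: confirming that the concrete $\wS(x,h)$ in the statement is genuinely the instance of \eqref{eqn4} to which Lemma~\ref{lem brii2} applies, which rests entirely on the two structural facts just used — explicitness of the Runge--Kutta method removing the $x'$-dependence of $\tff$, and quadraticity of $I$ collapsing $\bii(x,y)$ to $i\bigl(\tsfrac{x+y}{2}\bigr)$ and making the step affine. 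No new estimates beyond those already in Lemmas~\ref{lem tff}, \ref{lem exist f2}, \ref{lem brii}, \ref{lem brii2} and Theorems~\ref{thm exist}--\ref{thm error2} are required.
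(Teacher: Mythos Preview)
Your proposal is correct and follows essentially the same approach as the paper: the paper states that the corollary is a direct consequence of \eqref{eqn48z}, Lemmas~\ref{lem tff}, \ref{lem exist f2}, \ref{lem brii} and \ref{lem brii2} and Theorems~\ref{thm exist} and \ref{thm error2}, and in Section~\ref{sec examples} it spells out exactly the choices $\tii=\hii:=i(x)$, $\bii(x,x')=\tsfrac12(i(x)+i(x'))$ and $\brii:=\bii(x,y)$ with $y=x+h\tff(x,h)$ that you use. Your additional verification that these choices reproduce the displayed $\wS(x,h)$ (via the affine collapse $\brii(x,h)=i(x+\tsfrac{h}{2}\tff(x,h))$) and your explicit derivation of the linear system for $x'$ are precisely the small bookkeeping steps the paper leaves to the reader.
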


\section{Numerical examples}
\label{sec examples}

In this section we experiment with using the new linearly implicit (when $I$ is quadratic) discrete gradient method constructed in Corollary \ref{cor 1}.  To demonstrate the efficiency gain due to only needing to solve a linear system at each time step we will compare it with the standard projection method from \cite{HLW} on a problem with quadratic $I$.  

The new discrete gradient method we suggested in Corollary \ref{cor 1} for the case when $I$ is quadratic corresponds to defining $\tii(x,x',h) = \hii(x,x',h) := i(x)$, $\bii(x,x') = \tsfrac{1}{2} (i(x) + i(x'))$ and $\brii(x,x',h) := \bii(x,y)$ where $y$ satisfies $y = x + h \tff(x,h)$ and $\tff$ is defined by an explicit $s$-stage Runge-Kutta method.  With these choices for $\tff$, $\tii$, $\bii$, $\hii$ and $\brii$ then the discrete gradient method defined by \eqref{disc1} and \eqref{eqn4} becomes the one defined in Corollary \ref{cor 1}.  

In our experiments below we use the classical explicit $4^{\rm th}$ order Runge-Kutta (RK4) method to define $\tff$.  It is defined by the Butcher tableau (see e.g. \cite[p. 30]{HLW}):
$$
	\begin{array}{c|cccc}
		\star & & & & \\
		\star & \tsfrac{1}{2} & & & \\
		\star & 0 & \tsfrac{1}{2} & & \\
		\star & 0 & 0 & 1 & \\
		\hline
		& \tsfrac{1}{6} & \tsfrac{1}{3} & \tsfrac{1}{3} & \tsfrac{1}{6}
	\end{array}
$$
The entries denoted by $\star$ are not required because we are only considering autonomous ODEs.  

Since $I$ is quadratic, $i(x)$ is linear and there exists a matrix $M$ and a vector $b$ such that $i(x) = M x + b$ for all $x \in \bbR^d$.  In the case when $i(x) \neq 0$, to obtain $x'$ at each time step of the method in Corollary \ref{cor 1}, we must solve the linear system
\begin{equation}
\label{eqn88}
	(\idop - \tsfrac{h}{2} \wS(x,h) M) x' = (\idop + \tsfrac{h}{2} \wS(x,h) M) x + h \wS(x,h) b,
\end{equation}
where $\idop \in \bbR^d$ is the identity matrix.  Note that the cost of computing $\tff(x,h)$ at each time step is essentially the same as the cost for computing the RK4 method, so we already know that computing this new discrete gradient method will cost more than the RK4 method.

To compare this new linearly implicit discrete gradient method with another integral preserving method we also consider the standard projection method (see Algorithm IV.4.2 in \cite{HLW}) with RK4 as the underlying method.  The algorithm is:
\begin{align*}
	&\mbox{1. Compute $y$ by solving $y = h \tff(x,h)$ (where $\tff$ corresponds to the RK4 method)}, \\
	&\mbox{2. Compute $x'$ by solving $x' = y + \lambda i(y)$ such that $I(x') = I(x)$ for $x'$ and $\lambda \in \bbR$}.
\end{align*}
Actually, step $2$ above is what is suggested in equation (4.5) of \cite{HLW}, after Algorithm IV.4.2, as a more convenient nonlinear system to solve (by reducing the number of evaluations of $i(\cdot)$ required).  To solve the nonlinear system in step $2$ Hairer, Lubich and Wanner use the following simplified Newton iteration (see \cite[p. 111]{HLW} for details)
$$
	\lambda_0 = 0 \qquad \mbox{and} \qquad \lambda_{i+1} = \lambda_i - \frac{I(y + \lambda_i i(y)) - I(x)}{i(y) \cdot i(y)}.
$$
Once this iterative scheme has converged to $\lambda^*$ then $x'$ is computed using $x' = x + \lambda^* i(y)$.  

\subsection{Modified rigid body motion}

In the following example we will compare the performance of our new discrete gradient method with the RK4 method and another integral preserving method, the standard projection method (all described above).  We first demonstrate the benefits of preserving the integral by inspecting phase space plots for the RK4 method and our new discrete gradient method.  We will see that the integral preserving method does a much better job of following the trajectory of the exact solution.  To compare the errors we include all three methods.  We will see that the errors for all three methods are of similar size, that all three methods are of the same order, and that our new discrete gradient method is more efficient than the standard projection method.

The example we use for our computations is a modification to the equations for rigid body motion in three dimensions (see e.g. Example 1.7 in \cite[p. 99]{HLW}).  
For a parameter $\alpha \in \bbR$, the augmented equations of motion for a body with centre of mass at the origin are
\begin{equation}
\label{eqn50z}
 \frac{\dd}{\dd t} \left[ \threebyone{x_1}{x_2}{x_3} \right] = \left[ \threebythree{0}{-x_3}{x_2 - \alpha x_1^2}{x_3}{0}{-x_1}{-x_2 + \alpha x_1^2}{x_1}{0} \right]
 \left[ \threebyone{x_1/I_1}{x_2/I_2}{x_3/I_3} \right],
\end{equation}
where $I_1$, $I_2$ and $I_3$ are also parameters.
In the case when $\alpha = 0$ this system reduces to the equations for rigid body motion where the vector $x = (x_1,x_2,x_3)^T$ is the angular momentum in the body frame and the $I_i$ parameters are the principal moments of inertia.  Moreover, when $\alpha = 0$ there are two quadratic first integrals, but in the general case when $\alpha \neq 0$ then the only first integral is,
$$
	I(x) = \frac{1}{2} \left( \frac{x_1^2}{I_1} + \frac{x_2^2}{I_2} + \frac{x_3^2}{I_3} \right).
$$
Notice that \eqref{eqn50z} has the form of \eqref{p3}.

In our computations we have taken $I_1 = 2$, $I_2 = 1$, $I_3 = 2/3$, $\alpha = 1$ and we have used the initial condition $x_0 = ( \cos (1.1) , 0 , \sin(1.1) )^T$ at $t=0$.  Except for $\alpha$, these are the same values used in \cite{HLW}.

In Figure \ref{fig1} we see that phase space, projected onto the $x_1x_3$-plane, is more accurately represented when we compute the solution using our new discrete gradient method instead of the RK4 method.  Here we have used two different time steps ($h = 0.5$ and $h=\tsfrac{100}{92}$) and computed up to a final time of $t = 500$.  In the plots, the solid grey line is the exact solution and the black dots are the approximate solution at each time step using either RK4 or our new discrete gradient method.  For the larger step size of $h=\tsfrac{100}{92}$ the RK4 method appears to converge to equilibrium which is the wrong type of asymptotic behaviour.  For our new discrete gradient method, while the errors are clearly quite large for this larger step size, the solution appears to be circulating around a periodic orbit which is the correct asymptotic behaviour for this example.  Another possibility with the RK4 method (not observed in this example) is that the solution will blow up at some critical time (for example with $\alpha = 2$, $h = \tsfrac{10}{31}$ at $t \approx 100$).  This cannot happen for integral preserving methods such as our new discrete gradient method.

\begin{figure}
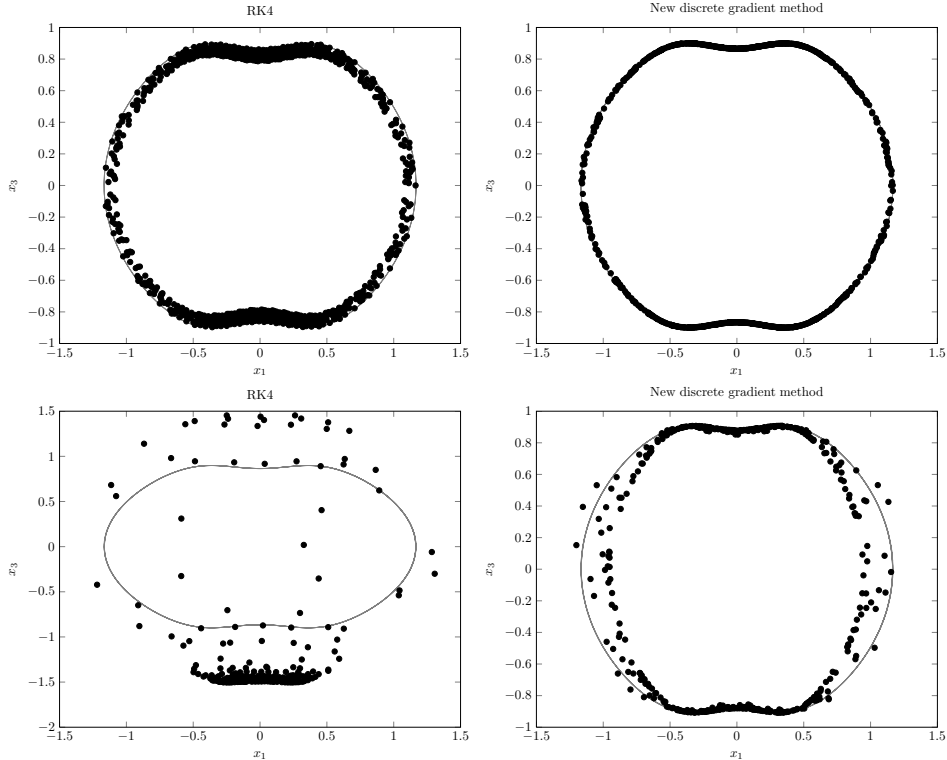
 
\begin{center}
\resizebox{0.49\textwidth}{!}{\input{fig1a.tikz}}
\resizebox{0.49\textwidth}{!}{\input{fig2a.tikz}}
\resizebox{0.49\textwidth}{!}{\input{fig1c.tikz}}
\resizebox{0.49\textwidth}{!}{\input{fig2c.tikz}}
\end{center}
\caption{Phase space projected onto the $x_1 x_3$-plane of modified rigid body motion comparing the performance of the RK4 method and our new discrete gradient method, computing the solution for $t \in [0,500]$ with $h = 0.5$ (top) and $h=\tsfrac{100}{92}$ (bottom).  The solid grey line is the exact solution and the black dots are the approximate solution at each time step.}
\label{fig1}
\end{figure}

In Figures \ref{fig2} and \ref{fig3} we compare the errors for the three different methods: RK4, the standard projection method, and our new discrete gradient method.  In Figure \ref{fig2} we have plotted the solution error and integral error versus time for the three different methods.  We see that the solution error is initially similar for all three methods, it grows as time increases, and then remains bounded.  The integral error plot clearly shows that the integral preserving methods preserve the integral up to double machine precision and are vastly superior in terms of preserving the integral than the non-integral preserving RK4 method.  These computations used a fixed time step of $h = 0.5$ for all three methods and computations were performed up to a final time of $t = 500$.

\begin{figure}
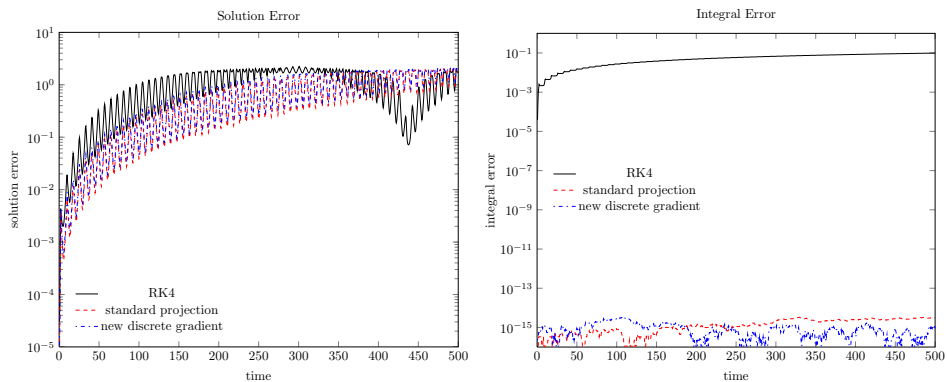

\begin{center}
\resizebox{0.49\textwidth}{!}{\input{fig3.tikz}}
\resizebox{0.49\textwidth}{!}{\input{fig4.tikz}}
\end{center}
\caption{Solution error and integral error vs. time for the RK4 method, the standard projection method and our new discrete gradient method. $h = 0.5$.}
\label{fig2}
\end{figure}

\begin{figure}
\begin{center}
\resizebox{0.49\textwidth}{!}{
%
%
%
%
\begin{tikzpicture}

\begin{loglogaxis}[%
view={0}{90},
width=0.7*6.01828521434821in,
height=0.7*4.74667979002625in,
scale only axis,
xmin=0.0001, xmax=1,
xminorticks=true,
xlabel={time step},
ymin=1e-14, ymax=100,
yminorticks=true,
ylabel={error at $t=100$},
title={Order},
legend style={at={(0.03,0.97)},anchor=north west,fill=none,draw=none,align=left}]
\addplot [
color=black,
solid,
mark=o,
mark options={solid}
]
coordinates{
 (0.000316227532033849,1.0516279854815e-13)(0.000774263481862878,3.48799540893117e-13)(0.0018957345971564,2.59676978897878e-12)(0.00464166357222429,8.65187577292017e-11)(0.0113649278327083,4.04938484965035e-09)(0.0278241513633834,2.289486035181e-07)(0.0681198910081744,1.55094213181425e-05)(0.1669449081803,0.00117404841215793)(0.408163265306122,0.0816504193263271)(1,1.16907223526726) 
};
\addlegendentry{RK4};

\addplot [
color=red,
dashed,
mark=square,
mark options={solid}
]
coordinates{
 (0.000316227532033849,3.35130378497843e-13)(0.000774263481862878,8.04014057064414e-13)(0.0018957345971564,1.9723147788463e-12)(0.00464166357222429,6.40902096008917e-11)(0.0113649278327083,2.41680124824533e-09)(0.0278241513633834,9.84731755949873e-08)(0.0681198910081744,4.55065188582967e-06)(0.1669449081803,0.00025076258281893)(0.408163265306122,0.0154214393806875)(1,1.86878072201194) 
};
\addlegendentry{standard projection};

\addplot [
color=blue,
dash pattern=on 1pt off 3pt on 3pt off 3pt,
mark=triangle,
mark options={solid}
]
coordinates{
 (0.000316227532033849,7.49259518793409e-13)(0.000774263481862878,3.9522338559326e-13)(0.0018957345971564,2.54646935622275e-12)(0.00464166357222429,6.44295464017134e-11)(0.0113649278327083,2.46156370741917e-09)(0.0278241513633834,1.02462249053849e-07)(0.0681198910081744,4.91092090673108e-06)(0.1669449081803,0.00028476265790702)(0.408163265306122,0.0187960291749281)(1,1.61836368300247) 
};
\addlegendentry{new discrete gradient};

\addplot [
color=black,
solid,
forget plot
]
coordinates{
 (0.0018957345971564,1.08425939327617e-08)(0.0113649278327083,1.08425939327617e-08) 
};
\addplot [
color=black,
solid,
forget plot
]
coordinates{
 (0.0018957345971564,8.39414773045786e-12)(0.0113649278327083,1.08425939327617e-08) 
};
\addplot [
color=black,
solid,
forget plot
]
coordinates{
 (0.0018957345971564,1.08425939327617e-08)(0.0018957345971564,8.39414773045786e-12) 
};
\node[above, inner sep=1mm, text=black]
at (axis cs:0.00464164700151258, 1.08425939327617e-08) {1};
\node[left, inner sep=1mm, text=black]
at (axis cs:0.0018957345971564, 3.01685822094721e-10) { 4};
\end{loglogaxis}
\end{tikzpicture}
\resizebox{0.49\textwidth}{!}{
%
%
%
%
\begin{tikzpicture}

\begin{loglogaxis}[%
view={0}{90},
width=0.7*6.01828521434821in,
height=0.7*4.74667979002625in,
scale only axis,
xmin=0.001, xmax=100,
xminorticks=true,
xlabel={CPU time},
ymin=1e-14, ymax=100,
yminorticks=true,
ylabel={error at $t=100$},
title={Efficiency},
legend style={at={(0.03,0.03)},anchor=south west,fill=none,draw=none,align=left}]
\addplot [
color=black,
solid,
mark=o,
mark options={solid}
]
coordinates{
 (10.889196893,1.0516279854815e-13)(4.013374885,3.48799540893117e-13)(1.64340706,2.59676978897878e-12)(0.673498701,8.65187577292017e-11)(0.275061237,4.04938484965035e-09)(0.112873892,2.289486035181e-07)(0.047110364,1.55094213181425e-05)(0.019582631,0.00117404841215793)(0.008141649,0.0816504193263271)(0.003129434,1.16907223526726) 
};
\addlegendentry{RK4};

\addplot [
color=red,
dashed,
mark=square,
mark options={solid}
]
coordinates{
 (28.006395351,3.35130378497843e-13)(11.362693199,8.04014057064414e-13)(4.646532261,1.9723147788463e-12)(2.056465456,6.40902096008917e-11)(0.945223368,2.41680124824533e-09)(0.391724111,9.84731755949873e-08)(0.16433206,4.55065188582967e-06)(0.07099513,0.00025076258281893)(0.034619964,0.0154214393806875)(0.017796495,1.86878072201194) 
};
\addlegendentry{standard projection};

\addplot [
color=blue,
dash pattern=on 1pt off 3pt on 3pt off 3pt,
mark=triangle,
mark options={solid}
]
coordinates{
 (22.912716172,7.49259518793409e-13)(8.454516197,3.9522338559326e-13)(3.427211901,2.54646935622275e-12)(1.503371488,6.44295464017134e-11)(0.612830402,2.46156370741917e-09)(0.249561277,1.02462249053849e-07)(0.102636413,4.91092090673108e-06)(0.041422499,0.00028476265790702)(0.017614477,0.0187960291749281)(0.007125662,1.61836368300247) 
};
\addlegendentry{new discrete gradient};

\addplot [
color=black,
solid,
forget plot
]
coordinates{
 (0.391724111,1.79818125595171e-07)(2.056465456,1.79818125595171e-07) 
};
\addplot [
color=black,
solid,
forget plot
]
coordinates{
 (0.391724111,1.79818125595171e-07)(2.056465456,2.36738562305437e-10) 
};
\addplot [
color=black,
solid,
forget plot
]
coordinates{
 (2.056465456,1.79818125595171e-07)(2.056465456,2.36738562305437e-10) 
};
\node[above, inner sep=1mm, text=black]
at (axis cs:0.897533900503936, 1.79818125595171e-07) {1};
\node[right, inner sep=1mm, text=black]
at (axis cs:2.056465456, 6.52456010240224e-09) { 4};
\end{loglogaxis}
\end{tikzpicture}
\end{center}
\caption{Order and efficiency of our new discrete gradient method compared with the RK4 method and the standard projection method.}
\label{fig3}
\end{figure}
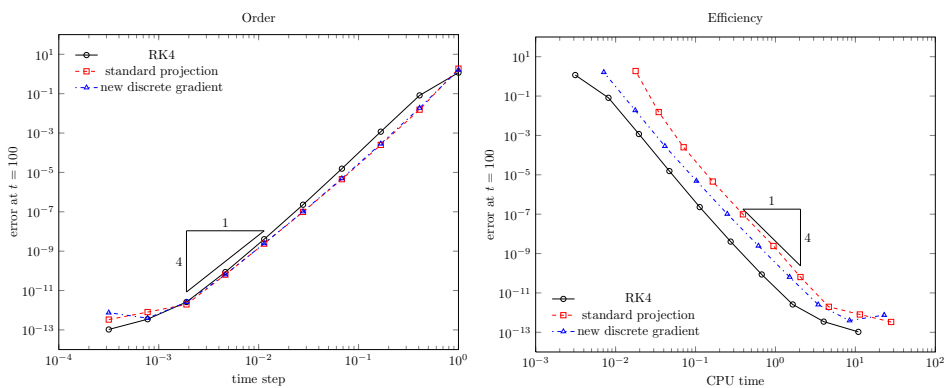

In Figure \ref{fig3} we compare the performance of the same three methods for different step sizes.  We are interested to see whether or not our new discrete gradient method is of the same order as RK4 (order 4), and to compare the efficiency of our new discrete gradient method with another integral preserving method, the standard projection method, where a nonlinear system of equations must be solved at each time step.  By plotting the solution error at time $t=100$ for different step sizes ($h \in [10^{-3.5}, 1]$) in the left plot of Figure \ref{fig3}, we confirm that our new discrete gradient method is of order $4$, the same as RK4 which is the underlying method defining $\tff$.  In the right plot of Figure \ref{fig3} for the same range of step sizes we have plotted the solution error at $t=100$ against the CPU time required to compute the solution up to $t=100$.  In this way we can compare the efficiency of these methods.  The plot clearly shows that our new discrete gradient method is more efficient than the standard projection method for this problem because it yields smaller errors using less computational effort.  The plot also shows that the RK4 method is more efficient again.  Since both integral preserving methods effectively compute the RK4 approximation within their methods the computational cost required by these methods is more than RK4.  Moreover, in the left plot of Figure \ref{fig3} we saw that the size of the error for all three methods is similar.  For these reasons RK4 is the most efficient method, however, RK4 does not preserve the integral and over longer time intervals it often has the wrong asymptotic behaviour.  

Also notice in Figure \ref{fig3} (right) that the difference in efficiency between our new discrete gradient method and the standard projection method is more pronounced for larger time steps.  This is probably due to the fact that the initial guess (the RK4 solution) in the Newton iteration for calculating the projection step in the standard projection method is more accurate for smaller time steps, resulting in fewer iterations until the convergence test is satisfied.

\subsection{A time step criterion}

A key feature of the existence and order of accuracy results in this paper is the fact that we may take $x$ as close as we like to a critical point of $I$ without any additional constraints on the time step.  By considering different $x$ values, and computing a single time step to get $x'$ for different time steps we can show that this feature of our results is illustrated in the modified rigid body motion example.  As criteria for a valid time step we consider the denominator of $\wS$ (which should be positive) and the condition number (ratio between the largest and smallest eigenvalues) of the matrix $\idop - \tsfrac{h}{2} \wS M$ (see \eqref{eqn88}).  The initial points we consider are $x = (R \cos(1.1), 0 , R \sin(1,1))^T$ for $R = 1$ (this is the initial condition used in our earlier simulations and is far away from a critical point of $I$), respectively $R=0.1$ and $R=0.01$ (which is near to the critical point $(0,0,0)^T$ of $I$).

In Figure \ref{fig4} we have plotted condition number of $\idop - \tsfrac{h}{2} \wS M$, the denominator of $\wS$ and the error after a single time step vs. time step, for different starting $x$ ($R=1$, $R=0.1$ and $R=0.01$).  We see that as the time step is increased there seem to be critical values where the condition number blows up, the denominator veers down to zero, and the error no longer behaves with the same asymptotic behaviour with respect to the time step.  We see that for $x$ close to the critical point the largest allowable time step actually increases.  This is consistent with our theory. 

\begin{figure}
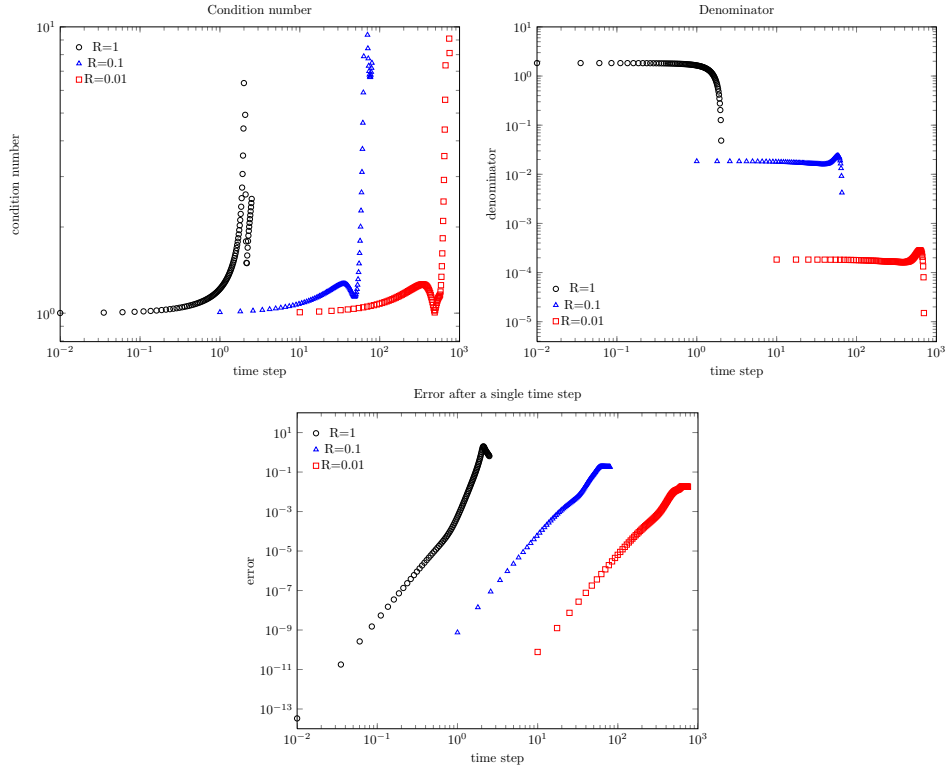

\begin{center}
\resizebox{0.49\textwidth}{!}{\input{fig8.tikz}}
\resizebox{0.49\textwidth}{!}{\input{fig8a.tikz}}
\resizebox{0.49\textwidth}{!}{\input{fig8b.tikz}}
\end{center}
\caption{Computations for a single time step of our new discrete gradient method applied to the modified rigid body motion example \eqref{eqn50z}, for different starting points which depend on $R$ (see text).  Top left: plot of condition number of $\idop - \tsfrac{h}{2} \wS M$ vs. time step; top right: plot of the value of the denominator of $\wS$ vs. time step; and bottom: plot of the error vs. time step.}
\label{fig4}
\end{figure}


\section{Conclusion}
\label{sec conclusion}

In this paper we have analysed discrete gradient methods from first principles.  We have established the bare essentials in terms of local Lipschitz continuity conditions and other criteria to ensure that these types of methods are locally well-defined and are of order $p$.  A key feature of our analysis is that we have removed any dependence of the time step on the distance to critical points of the preserved integral and all of the constants in our results are independent of $|i(x)|$.  

Although we have been careful to trace the value of constants through our proofs we do not make the claim that our constants are optimal.  The reasons for this are that we have assumed that the same constants $R$ and $L$ can be used in all of the inequalities in \eqref{eqn11z}, \eqref{eqn11z1} and \eqref{eqn12z}, and to simplify the presentation we sometimes used inequalities that were not completely sharp.  If we had more precise knowledge of the optimal constants for which \eqref{eqn2}, \eqref{eqn11z}, \eqref{eqn11z1}, \eqref{eqn12z}, \eqref{eqn30z} and \eqref{eqn30z1} hold, then we could repeat the arguments in the proofs of Theorems \ref{thm exist} and \ref{thm error2} to obtain better constants $R'$ and $H'$ in Theorem \ref{thm exist}, and $C_5$ and $H_5$ in Theorem \ref{thm error2}. 

As well as considering theoretical conditions for these methods we also developed results that will be useful for users of these methods for solving ODEs.  We have shown how Runge-Kutta methods can easily be used inside the framework of discrete gradient methods and we have also developed a new method that is linearly implicit when the integral to be preserved is quadratic, and of order $p$ for arbitrarily chosen $p \in \mathbb{N}$.  Our numerical experiments confirmed that, in this case, solving a linear system at each step instead of a nonlinear system led to significantly reduced computational cost.

The results in this paper can be easily applied to projection methods, see \cite{NMQSZ}, and further avenues for research include developing similar theory for discrete gradient methods applied to ODEs with Lyapunov functions, and discrete gradient methods applied to stiff ODEs, an issue not addressed here.

\section*{Acknowledgements}

This research was supported by the Australian Research Council.  Using the property of the discrete gradient in the bound of $T_1$ in \eqref{eqn21az} is a generalisation of an unpublished proof for projection methods by Ari Stern.


\end{document}